\theoremstyle{plain}
\newtheorem{theorem}{Theorem}[section]
\newtheorem{lemma}[theorem]{Lemma}
\newtheorem{proposition}[theorem]{Proposition}
\theoremstyle{definition}
\newtheorem*{definition*}{Definition}
\DeclareMathOperator{\st}{St}
\DeclareMathOperator{\Aut}{Aut}
\DeclareMathOperator{\St}{St}
\DeclareMathOperator{\Z}{\mathbb{Z}}
\numberwithin{equation}{section}
\title[Ramification structures for quotients of multi-EGS groups]{Ramification structures for quotients\\ of multi-EGS groups}
 \author[E.\, Di Domenico]{Elena Di Domenico}
 \address{Elena Di Domenico: Department of Mathematics, University of Trento, 38123 Trento, Italy - University of the Basque Country UPV/EHU, 48080 Bilbao, Spain}
 \email{elena.didomenico@unitn.it}
 \author[\c{S}. G\"{u}l]{\c{S}\"{u}kran G\"{u}l}
\address{\c{S}\"{u}kran G\"{u}l: Department of Mathematics\\ TED University\\
06420 Ankara, Turkey} 
\email{sukran.gul@tedu.edu.tr}
 \author[A. Thillaisundaram]{Anitha Thillaisundaram}
 \address{Anitha Thillaisundaram: Centre for Mathematical Sciences, Lund University,  223 62 Lund, Sweden}
 \email{anitha.t@cantab.net}
 \keywords{Groups acting on rooted trees, finite $p$-groups, ramification structures}
 \subjclass[2010]{Primary  20E08;  Secondary 20D15, 14J29}
 \thanks{The  first and the second author are supported by the Spanish Government, grant MTM2017-86802-P, partly with FEDER funds, and by the Basque Government, grant IT974-16. 
  The first author is also supported by the National Group for Algebraic and Geometric Structures, and their
Applications (GNSAGA - INdAM).
 The third author acknowledges support from EPSRC, grant EP/T005068/1, and from the Lincoln Institute of Advanced Studies. 
 She also thanks the University of the Basque Country for its hospitality.
The first and second authors would like to thank the School of Mathematics and Physics at the University of Lincoln for its hospitality while this research was conducted.
}
 \date{\today}
\begin{document}

\begin{abstract}
 Groups associated to surfaces isogenous to a higher product of curves can be characterised by a purely group-theoretic condition, which is the existence of a so-called ramification structure. 
 G\"{u}l and Uria-Albizuri showed that quotients of the periodic Grigorchuk-Gupta-Sidki groups, GGS-groups for short, admit ramification structures. 
 We extend their result by showing that quotients of generalisations of the GGS-groups, namely multi-EGS groups, also admit ramification structures.
\end{abstract}

\maketitle

\section{Introduction}

An algebraic surface $S$ is \emph{isogenous to a higher product of curves} if it is isomorphic to $(C_1\times C_2)/\Gamma$, where $C_1$ and $C_2$ are curves of genus at least~$2$, and $\Gamma$ is a finite group acting freely on $C_1\times C_2$. 
Such groups~$\Gamma$ 
can be characterised by a group-theoretic condition, which is the existence of a ramification structure:

\begin{definition*}
Let $\Gamma$ be a finite group and let $T=(g_1,g_2,\ldots,g_r)$ be a tuple of non-trivial elements of~$\Gamma$.

\begin{enumerate}
\item[$\bullet$] The tuple~$T$ is called a \emph{(spherical) system of generators} of~$\Gamma$ if 
\[
\langle g_1,\ldots,g_r\rangle =\Gamma\quad \text{and}\quad g_1g_2\cdots g_r=1.
\]
\item[$\bullet$] Let $\Sigma(T)$ be the union of all conjugates of the cyclic subgroups generated by the elements of~$T$, that is,
\[
\Sigma(T)=\bigcup_{g\in \Gamma} \bigcup_{i=1}^r \langle g_i\rangle^g.
\]
We say that two tuples~$T_1$ and~$T_2$ are \emph{disjoint} if $\Sigma(T_1)\cap \Sigma(T_2)=\{1\}$.
\item[$\bullet$] An \emph{unmixed ramification structure} of size $(r_1,r_2)$ for a finite group~$\Gamma$ is a pair $(T_1,T_2)$ of disjoint systems of generators of~$\Gamma$, where $|T_1|=r_1$ and $|T_2|=r_2$.
\end{enumerate}
\end{definition*}

We are interested only in the case when $r_1=r_2$, so we will simply say \emph{ramification structure} in the sequel.   
When $r_1=r_2=3$, one uses the term \emph{Beauville structure} instead of ramification structure. 
Also, a group admitting a Beauville structure is called a \emph{Beauville group}. The motivation for studying groups with Beauville structures, or more generally, ramification structures, comes  from the problem of classifying surfaces of complex dimension two isogenous to a higher product of two curves. We refer the reader to~\cite{NT} (and references therein) for a more detailed introduction to groups with ramification structures.

Now Beauville groups have gained much attention over the past decades; see for example the surveys \cite{ BBF, fai, fai2, jon}. 
In particular, the abelian Beauville groups were classified by Catanese \cite{cat}: a finite abelian group~$\Gamma$ is a Beauville group if and only if $\Gamma\cong C_n \times C_n$ for $n> 1$ with $\gcd(n, 6) = 1$. 
After abelian groups, it is natural to consider nilpotent groups. 
We note that the determination of nilpotent Beauville groups can be reduced to that of Beauville $p$-groups. 
We refer the reader to~\cite{GUA} for an overview of results concerning Beauville $p$-groups.

On the other hand, not much is known about ramification structures that are not Beauville.
Garion and Penegini have studied the existence of ramification structures for alternating and symmetric groups (which are also Beauville groups) in~\cite{GP}, and soon afterwards they characterised the abelian groups with ramification structures in~\cite{GP2}. 
Up until recently, the existence of ramification structures that are not Beauville for non-abelian $p$-groups have been only studied in~\cite{gul}.

Groups acting on rooted trees were first seen as a source for groups admitting ramification structures from the work of G\"{u}l and Uria-Albizuri. In~\cite{GUA}, they  showed that the quotients of periodic Grigorchuk-Gupta-Sidki (GGS-)groups admit Beauville  structures.
The GGS-groups  form an infinite family of groups acting on the $p$-adic tree, for $p$ an odd prime. 
 Briefly, a GGS-group is a group $G=\langle a,b\rangle$, where $a=(1\,2\, \cdots \,p)$ permutes the first-level vertices, whereas $b$ fixes the first-level vertices and is defined recursively by $b=(a^{e_1},\ldots,a^{e_{p-1}},b)$, for $e_1,\ldots,e_{p-1}\in \mathbb{F}_p$ with not all $e_i$ being zero. 
 We write $\mathbf{e}=(e_1,\ldots,e_{p-1})$ and call it the \emph{defining vector} of~$G$. We refer the reader to Sections~2 and 3 for  background material and definitions.

These groups first arose as easily describable examples of infinite finitely generated periodic groups, but  over the years groups acting on rooted trees have established themselves as a source of groups with  many other interesting properties, with links and applications to other areas of mathematics.

The result of~\cite{GUA} was extended in~\cite{DGT} to periodic GGS-groups acting on the $p^n$-adic tree, for $p$ any prime and $n\in\mathbb{N}$. 
Recently it was shown that quotients of the Grigorchuk groups admit non-Beauville ramification structures~\cite{NT}, where the  Grigorchuk groups are the earliest constructed infinite family  of groups acting on the binary rooted tree; see~\cite{Grigorchuk}.

In this paper, we show that  quotients of  multi-EGS groups  also admit ramification structures  that are not Beauville.
For $p$ an odd prime, a multi-EGS group acting on the $p$-adic tree is, loosely speaking, a group $G$ which is similar to a GGS-group acting on the $p$-adic tree, but with many $b$-type generators. 
We refer the reader to Section~3 for  the precise definition.

The multi-EGS groups are infinite groups acting faithfully on the $p$-adic tree. The natural  quotients of a multi-EGS group~$G$ are $G/\st_G(n)$, for $n\in\mathbb{N}$, where $\st_G(n)$ is the normal subgroup of the elements of~$G$ that pointwise fix the vertices at the $n$th level of the tree. 
The quotient $G/\st_G(n)$ acts on the finite tree consisting of the first $n$ layers of the full $p$-adic tree.
For $G$ a multi-EGS group, let $r$ be the dimension of the vector space spanned by all defining vectors of~$G$.
Then the main result of this paper is as follows.

\begin{theorem}\label{thm:multi-EGS}
Let $G$ be a multi-EGS group that is not a GGS-group. Then $G/\st_G(k)$ admits a ramification structure for
\begin{enumerate}
\item[(i)] $k\ge r+1$ if $G$ is regular branch over $G'$;
\item[(ii)] $k\ge 5$ if $G$ is periodic and not regular branch over  $G'$;
\item[(iii)] $k\ge 3$ if $G$ is non-periodic and not regular branch over  $G'$.
\end{enumerate}
\end{theorem}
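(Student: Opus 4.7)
The plan is to adapt the strategy of Gül and Uria-Albizuri \cite{GUA} (as extended in \cite{DGT}) to the multi-EGS setting by producing, for each admissible level $k$, two disjoint spherical systems of generators $T_1, T_2$ of the finite quotient $\bar G := G/\st_G(k)$. Since $\bar G$ is a finite $p$-group, the Burnside basis theorem tells us that a tuple generates $\bar G$ if and only if its image generates the Frattini quotient; the spherical generating condition therefore reduces to a linear-algebraic condition in the abelianisation, and the product-one condition can always be enforced by appending a single closing element that is the inverse of the product of the rest.

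The first step is to pin down $G^{\mathrm{ab}}$. For a multi-EGS group, $G/G'$ is an elementary abelian $p$-group generated by the classes of $a$ and of the $b$-type generators, and its rank is controlled by the number $r$ of linearly independent defining vectors. For $T_1$ I would take $a$ together with the natural $b$-generators and a closing element; this forces $|T_1|$ to be roughly $r+1$, which already indicates why the quotient has to be large enough, and is the source of the bound $k\ge r+1$ in case (i). For $T_2$ I would take products of the form $a^{c_i}b_{i,j}^{d_i}$ whose images in $\bar G^{\mathrm{ab}}$ span cyclic subgroups intersecting those of the $T_1$-images trivially, and again close off with a product-one element.

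The main obstacle is verifying disjointness. Any element of $\Sigma(T_1)\cap \Sigma(T_2)$ with non-trivial image in $\bar G^{\mathrm{ab}}$ is immediately ruled out by the abelianisation-level disjointness we have arranged, because conjugation and powering descend to $\bar G^{\mathrm{ab}}$. The remaining case, where the common element lies in $G'\st_G(k)/\st_G(k)$, must be handled using the branch structure. In case (i), the self-similarity of $G$ forces us to work at level at least $r+1$: the argument descends to the sections of $\bar G'$ at the first-level vertices and proceeds by induction on the level, exploiting that $G$ is regular branch over $G'$ so that these sections again contain $G'$ up to the required depth. In cases (ii) and (iii), where $G$ is not regular branch over $G'$, the image of $G'$ in $\bar G$ is much smaller and can be analysed directly from the explicit low-level commutator structure recorded in the earlier sections of the paper.

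Finally, the jump between $k\ge 4$ in (ii) and $k\ge 3$ in (iii) reflects the order of the image of $a$ in $\bar G$: in the non-periodic case $a$ has infinite order in $G$, so its image in $\bar G$ has a larger order and one has extra room to arrange disjoint cyclic subgroups in $\bar G^{\mathrm{ab}}$, dropping the required level by one. I expect the delicate branch-structure analysis in case (i) to be the hardest step, as it requires simultaneously tracking the $r$ defining vectors and the self-similar sections of $\bar G'$ across all $r+1$ levels.
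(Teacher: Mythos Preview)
Your outline has the right large-scale structure (two spherical systems, abelianisation handles the generic case, the derived subgroup carries the real work), but there are several concrete errors and a missing key mechanism.

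First, a factual mistake: $a$ is the rooted $p$-cycle and has order~$p$ in every multi-EGS group, periodic or not. Periodicity is governed by the sums of the defining vectors, and the element whose order distinguishes cases (ii) and (iii) is not $a$ but products such as $ab^{(j)}$; in the non-periodic case $\sum e_i\not\equiv 0$, so $(ab^{(j)})^p$ has a non-trivial rooted part in each section, which is what gives the extra leverage at level~$3$ rather than~$4$. Your explanation of the threshold shift is therefore wrong.

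Second, the reason for $k\ge r+1$ in case (i) is not the size of the generating tuple. It is that $d(G_n)=n$ for $n\le r$ (by \cite[Prop.~6]{GUA2} and \cite[Thm.~4.4]{FAGT}), so for $k\le r$ the quotient $G_k$ has fewer than $r+1$ generators and the natural systems collapse; only for $k\ge r+1$ do we have $\st_G(k)\le G'$ and $G_k^{\mathrm{ab}}\cong C_p^{\,r+1}$. More importantly, in case (i) the paper first invokes the isomorphism from \cite{FAGT} between level quotients of a multi-EGS group regular branch over $G'$ and those of an associated multi-GGS group, and then works with the \emph{distinguished} directed automorphisms of the latter. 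Your plan omits this reduction entirely.

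Third, and most seriously, the abelianisation argument disposes of elements whose image in $G_k^{\mathrm{ab}}$ is non-trivial, but the hard case is precisely the $p$-th powers $(ad_1)^p,(ad_2)^p$, which lie in $G_k'$ and cannot be separated there. The paper handles this not by ``descending to sections and inducting on the level'' but by a delicate explicit computation in $G_3$ (Propositions~\ref{words} and~\ref{EGS-key}): one shows that $\langle(ad_1)^p\rangle=\langle(ad_2)^p\rangle^g$ in $G_3$ forces the $b_1$-exponents of $d_1$ and $d_2$ to coincide, using the abelianness of $\St_{G_2}(1)$ and an iterated conjugation trick to produce an element of $G_2'$. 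Your inductive sketch does not supply this mechanism, and without it the disjointness verification for pairs such as $(ab_1,ab_1^{\,2})$ cannot be completed.
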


Theorem~\ref{thm:multi-EGS} shows that a multi-EGS group is a source for the construction of an infinite series of $p$-groups admitting ramification structures for every odd prime~$p$.
We observe that the above result includes non-periodic multi-EGS groups that are not GGS-groups, whereas from \cite{GUA} we know that no quotient $G/\st_G(n)$, for $G$ a non-periodic GGS-group, admits a ramification structure.

\smallskip

\noindent\emph{Organisation.} Section~2 of this paper consists of background material for groups acting on rooted trees. 
In Section~3 we introduce the multi-EGS groups. In Section~4 we prove  Theorem~\ref{thm:multi-EGS}.

\smallskip

\noindent\emph{Notation.}
If $\Gamma$ is a finitely generated group, we write $d(\Gamma)$ for the minimum number of generators of~$\Gamma$.


\section{Preliminaries}

All trees considered here will be rooted, meaning that there is a distinguished vertex called the root, with one degree less than all other vertices. 
Throughout the paper, $p$ denotes an odd prime.
Let $\mathcal{T}$ be the $p$-adic  tree, meaning all vertices have $p$ children.  
Using the alphabet $X = \{1,\ldots,p\}$, the vertices $u_\omega$ of $\mathcal{T}$ are labelled bijectively by elements $\omega$ of the free
 monoid~$X^*$ as follows: the root of~$\mathcal{T}$
 is labelled by the empty word~$\varnothing$, and for each word
 $\omega \in X^*$ and letter $x \in X$ there is an edge connecting $u_\omega$ to~$u_{\omega x}$.  
 We say that $u_\omega$ precedes $u_\lambda$ whenever $\omega$ is a prefix of $\lambda$.

  A natural length function on~$X^*$ is defined as follows: the words
  $\omega$ of length $\lvert \omega \rvert = n$, representing vertices
  $u_\omega$ that are at distance $n$ from the root, are the $n$th-level vertices and form the \textit{$n$th layer} of the tree. 
  The elements of the boundary $\partial \mathcal{T}$ correspond
  naturally to infinite simple rooted paths, and they are in one-to-one correspondence with the $p$-adic integers.

  Denote by $\mathcal{T}_u$ the full  subtree of~$\mathcal{T}$ that has its root at the vertex~$u$ and includes all vertices succeeding~$u$.  For any two vertices $u = u_\omega$ and $v = u_\lambda$, the map
  $u_{\omega \tau} \mapsto u_{\lambda \tau}$, induced by replacing the
  prefix $\omega$ by $\lambda$, yields an isomorphism between the
  subtrees $\mathcal{T}_u$ and~$\mathcal{T}_v$.

 We note that every automorphism of $\mathcal{T}$ fixes the root, and the orbits of $\mathrm{Aut}(\mathcal{T})$ on the vertices of the tree~$\mathcal{T}$ are precisely its layers. 
 For $f \in \mathrm{Aut}(\mathcal{T})$, the image of a vertex $u$ under
 $f$ is denoted by~$u^f$.
 The automorphism $f$ induces a faithful action
 on the monoid~$X^*$ such that
 $(u_\omega)^f = u_{\omega^f}$. 
 For $\omega \in X^*$ and
 $x \in X$ we have $(\omega x)^f = \omega^f x'$ where $x' \in X$ is
 uniquely determined by $\omega$ and~$f$. 
 This induces a permutation
 $f(\omega)$ of $X$ so that
  \[
  (\omega x)^f = \omega^f x^{f(\omega)}, \qquad \text{and hence}
  \quad   (u_{\omega x})^f = u_{\omega^f x^{f(\omega)}}.
  \]
  The permutation~$f(\omega)$ is also called the \emph{label} of $f$ at $\omega$. The automorphism $f$ is \textit{rooted} if $f(\omega) = 1$ for all
  $\omega \ne \varnothing$.
  It is \textit{directed}, with directed
  path~$\ell$ for some  $\ell \in \partial \mathcal{T}$, if the support
  $\{ \omega \mid f(\omega) \ne 1 \}$ of its labelling is infinite and
  marks only vertices at distance~$1$ from the set of vertices corresponding  to the path~$\ell$.


\subsection{Subgroups of $\Aut(\mathcal{T})$}
 For $G\le \Aut(\mathcal{T})$, the
\textit{vertex stabiliser} $\St_G(u)$ is the subgroup
consisting of elements in $G$ that fix the vertex~$u$.
For
$n \in \mathbb{N}$, the \textit{$n$th-level stabiliser}
 $\St_G(n)= \bigcap_{\lvert \omega \rvert =n}
 \St_G(u_\omega)$
is the subgroup consisting of automorphisms that fix all vertices at
level~$n$. 
Let $\mathcal{T}_{[n]}$ be the finite subtree of~$\mathcal{T}$ of
vertices up to level~$n$. 
Then  $\St_G(n)$ is the kernel of the induced action of $G$ on $\mathcal{T}_{[n]}$, and we will denote by $G_n$ the quotient $G/\St_G(n)$.

Each $g\in \St_{\mathrm{Aut}(\mathcal{T})} (n)$ can be described completely in terms of its restrictions $g_1,\ldots,g_{p^n}$ to the subtrees rooted at vertices of level~$n$.
Indeed, the map 
\[
\psi_n \colon \St_{\mathrm{Aut}(\mathcal{T})}(n) \rightarrow
\prod\nolimits_{\lvert \omega \rvert = n} \mathrm{Aut}(\mathcal{T}_{u_\omega})
\cong \mathrm{Aut}(\mathcal{T}) \times \overset{p^n}{\cdots} \times
\mathrm{Aut}(\mathcal{T}),
\]
sending $g$ to $(g_1,\ldots,g_{p^n})$, is a natural isomorphism. For ease of notation, we write $\psi=\psi_1$. In addition,  we write 
\[
{\phi}_{n}:\text{St}_{G_n}(1)\rightarrow G_{n-1}\times \overset{p}\cdots\times G_{n-1}
\]
for the corresponding $\psi$ when working in the quotient.

For  $\omega\in X^*$, we further define 
\[
\varphi_\omega :\mathrm{St}_{\mathrm{Aut}(\mathcal{T})}(u_{\omega}) \rightarrow \mathrm{Aut}(\mathcal{T}_{u_\omega}) \cong \mathrm{Aut}(\mathcal{T})
\]
to be the natural restriction of $\mathrm{St}_{\mathrm{Aut}(\mathcal{T})}(u_{\omega})$ to  $\mathrm{Aut}(\mathcal{T}_{u_\omega})$.
A group $G \leq \Aut(\mathcal{T})$ is said to be \emph{self-similar} if the images under $\varphi_{\omega}$ and $\psi_n$ are contained in $G$ and $G \times \overset{p^n}{\cdots} \times G$, respectively, for all $\omega\in X^*$ and $n\in \mathbb{N}$.

We recall that a spherically transitive (that is, transitive on every layer of the tree) self-similar group $G$  with $ K \ne 1$ such that $K\times \overset{p}\cdots \times K \subseteq \psi( \st_K(1))$ and
$\lvert G : K \rvert < \infty$, 
is said to be \emph{regular
branch over $K$}. 
If, in the previous definition, we remove the condition $\lvert G : K \rvert < \infty$,  then $G$
is said to be \emph{weakly regular branch over~$K$}.


\section{Multi-EGS groups}

We first recall the GGS-groups acting on the $p$-adic tree~$\mathcal{T}$ and then proceed onto its generalisation.

\subsection{GGS-groups acting on the $p$-adic tree}

By $a$ we denote the rooted automorphism, corresponding to the
$p$-cycle $(1 \, 2 \, \cdots \, p)\in \text{Sym}(p)$, that cyclically
permutes the vertices $u_1, \ldots, u_p$ at the first level.
Recall
the coordinate map
\[
\psi\colon \st_{\Aut(\mathcal{T})}(1) \rightarrow \Aut(\mathcal{T}_{u_1}) \times
\cdots \times \Aut(\mathcal{T}_{u_p}) \cong \Aut(\mathcal{T}) \times \overset{p}{\cdots}
\times \Aut(\mathcal{T}).
\]
Given a  vector $
\mathbf{e} =(e_{1}, e_{2},\ldots , e_{p-1})\in
(\mathbb{F}_p)^{p-1}\backslash \{\mathbf{0}\}$,
we recursively define a  directed automorphism $b \in
\st_{\Aut(\mathcal{T})}(1)$ via
\[
\psi(b)=(a^{e_{1}}, a^{e_{2}},\ldots,a^{e_{p-1}},b).
\]
We call the subgroup $G=G_{\mathbf{e}}=\langle a, b
\rangle$ of $\Aut(\mathcal{T})$ the \textit{GGS-group} associated
to the defining vector $\mathbf{e}$.
We observe that $\langle a
\rangle \cong\langle b \rangle \cong C_p$
are cyclic groups of order~$p$, 
and $G$  acts spherically transitively on~$\mathcal{T}$.

If $G_{\mathbf{e}}=\langle a, b\rangle$ is a GGS-group corresponding to the defining vector $\mathbf{e}$, then $G$ is periodic, and thus an infinite $p$-group if and only if $ \sum\nolimits_{j=1}^{p-1} e_{j}\equiv 0 \pmod p$; compare \cite{NewHorizons, vov}.

We write $\mathcal{G}= \langle a,b \rangle$ with $\psi(b)=(a,\ldots,a,b)$,
for the GGS-group arising from the constant defining vector~$(1,\ldots,1)$. 
It is known that $\mathcal{G}$ is weakly regular branch \cite[Lem.~4.2]{FAZR2} but not branch \cite[Thm.~3.7]{FAGUA}.

\subsection{Multi-GGS groups acting on the $p$-adic tree}
Before defining multi-EGS groups, it is helpful to first define the subclass of multi-GGS groups, which more closely resemble GGS-groups.

Given $1\le r \le p-1$ and a finite $r$-tuple $\mathbf{E}$ of
$\mathbb{F}_p$-linearly independent vectors
\[
\mathbf{e}_i =(e_{i,1}, e_{i,2},\ldots , e_{i,p-1})\in
(\mathbb{F}_p)^{p-1}, \qquad i\in \{1,\ldots,r \},
\]
we recursively define directed automorphisms $b_1, \ldots, b_r \in
\st_{\Aut(\mathcal{T})}(1)$ via
\[
\psi(b_i)=(a^{e_{i,1}}, a^{e_{i,2}},\ldots,a^{e_{i,p-1}},b_i),
\qquad i\in \{1,\ldots,r \}.
\]
We call the subgroup $G=G_{\mathbf{E}}=\langle a, b_1, \ldots, b_r
\rangle$ of $\Aut(\mathcal{T})$ the \textit{multi-GGS group} associated
to the defining vectors $\mathbf{E}$.  
We observe that $\langle a\rangle \cong C_p$ and $\langle b_1, \ldots, b_r \rangle \cong C_p^r$ are elementary abelian $p$-groups, and we note that  a multi-GGS group $G$ acts spherically transitively on the tree~$\mathcal{T}$.

These groups were introduced in~\cite{AKT}, and were generalised in~\cite{KT} to multi-EGS groups, which are defined below. When $r=1$, then $G=\langle a,b_1\rangle$ is a GGS-group.

 From
 \cite[Prop.~4.3]{AKT}, we have that for $G=\langle a, b_1,\ldots,b_r\rangle$,
\begin{equation}\label{eq:abelianisation1}
G/G'=\langle aG', b_1G',\ldots,b_rG'\rangle\cong C_p^{\,r+1},
\end{equation}
and according to \cite[Prop.~6]{GUA2}, we have that
\begin{equation} \label{eq:inside G'}
\st_G(r+1) \leq G'.
\end{equation}

If $G_{\mathbf{E}}=\langle a, b_1, \ldots, b_r \rangle$ is a multi-GGS group corresponding to an $r$-tuple of defining vectors $\mathbf{E}=(\mathbf{e}_i)_{i=1}^r$, then $G$ is an infinite $p$-group if and only if for every $\mathbf{e}_i = (e_{i,1}, \ldots, e_{i,p-1})$, we have $ \sum\nolimits_{j=1}^{p-1} e_{i,j}\equiv 0 \pmod p$;
 compare \cite{NewHorizons, vov}.

For all multi-GGS groups $G \ne \mathcal{G}$, it follows from \cite[Prop.~3.5]{AKT}, that $G$ is regular branch over $\gamma_3(G)$. 
Furthermore, if $G$ is not a GGS-group, that is, if $G = \langle a, b_1, \ldots, b_r \rangle$ with $r\ge 2$, then $G$ is regular branch over $G'$ and $\psi(\st_G(1)')=G'\times \overset{p}{\cdots}\times G'$; compare~\cite[Lem.~2]{GUA2}.

On the other hand, for a multi-GGS group~$G$ the proof of \cite[Prop.~6]{GUA2} yields that $d(G_n)\geq n$ for $1\le n\le r+1$.
Indeed, if $G$ is regular branch over $G'$, then equality holds. 
More precisely, from \cite[Thm.~4.4]{FAGT} we have the following results:
For a multi-GGS group $G$ which is regular branch over $G'$, there exist so-called `distinguished directed automorphisms' $b_i$, for $1\leq i\leq r$, such that $G=\langle a, b_1, \ldots, b_r\rangle$, for $1\leq r\leq p-1$, and we have that
\begin{equation} \label{eqn: vanished bi}
\overline{b}_i\in G_n'  \quad \text{for all}  \ \ i\geq n  \ \ \text{ and for every}  \ \ n \in \{1,\ldots,r\},
\end{equation}
\begin{equation} \label{eqn: generators of Gn}
d(G_n)=n \ \ \text{and} \ \ 
G_n=\langle \overline{a},\overline {b}_1, \ldots, \overline{b}_{n-1} \rangle \ \ \text{for every}  \ \ n \in \{1,\ldots,r\};
\end{equation}
here we write $\overline{g}$ for the image of $g\in G$ in $G_n$.

Notice also that for any multi-GGS group $G$, we have that $d(G_n)=r+1$ for all $n\geq r+1$, by \eqref{eq:abelianisation1} and \eqref{eq:inside G'}.

\subsection{Multi-EGS groups acting on the $p$-adic tree}
We recall the definition of a multi-EGS group  from~\cite{KT}.
For $j \in \{ 1, \ldots, p \}$ let $r_j \in \{0,1,\ldots,p-1\}$, with
$r_j \not = 0$ for at least one index~$j$. 
We fix the numerical datum
$\mathbf{E} = (\mathbf{E}^{(1)}, \ldots, \mathbf{E}^{(p)})$, where
each
$\mathbf{E}^{(j)} = (\mathbf{e}^{(j)}_1, \ldots,
\mathbf{e}^{(j)}_{r_j})$
is an $r_j$-tuple of $\mathbb{F}_p$-linearly independent
vectors
\[
\mathbf{e}^{(j)}_i = \big( e^{(j)}_{i,1}, \ldots, e^{(j)}_{i,p-1}
\big) \in (\mathbb{F}_p)^{p-1}, \quad i \in \{1, \ldots, r_j
\}.
\]
Write $r=r_1+\cdots +r_p$, and we let $\mathbf{V}$ be the vector space spanned by the $r$ vectors in~$\mathbf{E}$.

The \emph{multi-EGS group} 
 associated to $\mathbf{E}$ is the group
\begin{align*}
 G = G_\mathbf{E} & = \langle a, \mathbf{b}^{(1)}, \ldots,
 \mathbf{b}^{(p)} \rangle \\
 & = \big\langle \{a \} \cup \{ b^{(j)}_i \mid 1 \leq j \leq p, \, 1
 \leq i \leq r_j \} \big\rangle
\end{align*}
where, for each $j \in \{1,\ldots,p\}$, the generator family
$\mathbf{b}^{(j)} = \{b^{(j)}_1, \ldots, b^{(j)}_{r_j}\}$ consists of
commuting directed automorphisms $b^{(j)}_i \in \mathrm{St}_{\Aut(\mathcal{T})}(1)$
along the directed path
\[
\big( \varnothing, (p-j+1), (p-j+1)(p-j+1), \ldots \big) \in \partial \mathcal{T}
\]
that satisfy the recursive relations
\[
\psi(b^{(j)}_i) = \Big( a^{e^{(j)}_{i,j}}, \ldots, a^{e^{(j)}_{i,p-1}},b^{(j)}_i,
a^{e^{(j)}_{i,1}},\ldots, a^{e^{(j)}_{i,j-1}} \Big).
\]
The vector $\mathbf{e}^{(j)}_i$ is the defining vector of
$b^{(j)}_i$. 
We say that $\mathbf{e}^{(j)}_i$ is \emph{symmetric} if $e^{(j)}_{i,k}=e^{(j)}_{i,p-k}$ for all $1\le k\le \frac{p-1}{2}$; otherwise $\mathbf{e}^{(j)}_i$ is \emph{non-symmetric}.

The multi-EGS groups  are infinite and act spherically transitively on~$\mathcal{T}$. 
From \cite[Prop.~3.9]{KT},  for $G= \langle a, \mathbf{b}^{(1)}, \ldots,
\mathbf{b}^{(p)} \rangle$,
\begin{equation*}\label{eq:abelianisation2}
G/G'=\langle aG', b_1^{(1)}G',\ldots,b_{r_1}^{(1)}G',\,\ldots\,, b_1^{(p)}G',\ldots,b_{r_p}^{(p)}G'\rangle\cong C_p^{\,r+1}.
\end{equation*}

A multi-EGS group~$G=G_\mathbf{E}$ is a periodic group if and only if for every $\mathbf{e} = (e_{1}, \ldots, e_{p-1})\in \mathbf{E}$, we have $ \sum\nolimits_{j=1}^{p-1} e_{j}\equiv 0 \pmod p$; compare~\cite[Lem.~3.13]{TUA}.

For $G=G_{\mathbf{E}}=\langle a, \mathbf{b}^{(1)}, \ldots,
\mathbf{b}^{(p)} \rangle$ a multi-EGS group, if either at least one $\mathbf{e}\in\mathbf{E}$ is non-symmetric, or $\dim \mathbf{V}\ge 2$, then $G$ is regular branch over $G'$. 
Else, a multi-EGS group $G\ne\mathcal{G}$ is regular branch over~$\gamma_3(G)$ if, for 
$j \in \{1, \ldots,p\}$, every non-empty family $\mathbf{b}^{(j)}=\{b_1^{(j)}\}$
consists of a single directed automorphism with $\mathbf{e}_1^{(j)}=\mathbf{e}$, where $\mathbf{e}$ is a fixed non-constant symmetric defining vector;
compare~\cite[Prop.~3.2]{TUA}.

We record the following result:

\begin{lemma}\label{lem:multi-GGS}
Let $G=\langle a, \mathbf{b}^{(1)}, \ldots,
\mathbf{b}^{(p)} \rangle$ be a multi-EGS group, let  $\ell\in\{1,\ldots,p\}$ and suppose $1\ne d\in \langle \mathbf{b}^{(\ell)}\rangle$ has a defining vector of zero sum.  
Then  the order of  $a^jd$ for  $j\in\mathbb{F}_p^*$ is $p^2$ in both $G$ and~$G_{3}$.
\end{lemma}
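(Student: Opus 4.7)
The plan is to carry out a direct computation of $(a^jd)^p$ through the coordinate map~$\psi$ and then track its image level by level. Write $\kappa=p-\ell+1$. Since the elements of $\mathbf{b}^{(\ell)}$ are directed along the path $(p-\ell+1)^\infty$ and pairwise commute, $\langle\mathbf{b}^{(\ell)}\rangle$ is elementary abelian of rank~$r_\ell$; in particular $d^p=1$, and $d\ne 1$ forces its defining vector $\mathbf{e}=(e_1,\ldots,e_{p-1})$ to be non-zero. From the recursive definition of the $b_i^{(\ell)}$ I would read off that $\psi(d)=(D_1,\ldots,D_p)$ with $D_\kappa=d$ and $D_m\in\langle a\rangle$ for $m\ne\kappa$, the exponents being a cyclic relisting of the~$e_i$; the zero-sum hypothesis then says that the sum of these $p-1$ exponents vanishes modulo~$p$.

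For the main computation, a standard induction gives $(a^jd)^p=\delta_{p-1}\delta_{p-2}\cdots\delta_1\delta_0$ with $\delta_i=a^{-ij}\,d\,a^{ij}$, and combined with the shift formula $\psi(a^{-m}v\,a^m)=(v_{1+m},\ldots,v_{p+m})$ (indices mod~$p$) this yields that the $k$-th coordinate of $\psi((a^jd)^p)$ equals $D_{k+(p-1)j}\,D_{k+(p-2)j}\cdots D_{k+j}\,D_k$. Because $j\in\mathbb{F}_p^*$ and $p$ is prime, the indices $k+ij$ sweep out all of $\{1,\ldots,p\}$ as $i$ varies, so the product contains exactly one factor equal to~$d$ (at the unique $i$ with $k+ij\equiv\kappa\pmod p$) and $p-1$ factors that are powers of~$a$. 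The $a$-powers on each side of~$d$ multiply to $a^{L_k}$ and $a^{R_k}$ respectively, with $L_k+R_k\equiv\sum_{i=1}^{p-1}e_i\equiv 0\pmod p$, so the $k$-th coordinate collapses to the conjugate $a^{L_k}\,d\,a^{-L_k}$ of~$d$.

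Both halves of the order statement drop out of this. Since $d^p=1$, each coordinate of $\psi\bigl((a^jd)^{p^2}\bigr)$ is trivial, whence $(a^jd)^{p^2}=1$ already in $\Aut(\mathcal{T})$ and a fortiori in $G$ and~$G_3$. Conversely, $\mathbf{e}\ne\mathbf{0}$ forces $\psi(d)$ to contain at least one non-trivial power of~$a$, so $d\notin\st(2)$, and the same holds for every conjugate of~$d$. Hence every coordinate of $\psi((a^jd)^p)$ lies outside $\st(2)$, which puts $(a^jd)^p$ outside $\st_G(3)$ and makes its image in~$G_3$ non-trivial. The order of $a^jd$ is therefore exactly~$p^2$ in both $G_3$ and~$G$. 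The only real technicality is the bookkeeping of the cyclic shifts and confirming that the two blocks of $a$-powers on either side of~$d$ genuinely commute among themselves, which is where the zero-sum hypothesis enters, but this is routine rather than a serious obstacle.
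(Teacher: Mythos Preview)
Your proof is correct and follows essentially the same route as the paper's: both compute $(a^jd)^p$ as the product $d^{a^{(p-1)j}}\cdots d^{a^j}d$, apply $\psi$ coordinatewise to see each component is a word $a^{L_k}d\,a^{R_k}$ with $L_k+R_k\equiv\sum e_i\equiv 0$, hence a conjugate of $d$, and then conclude using $d^p=1$ together with $d\notin\st_G(2)$. The only wrinkle is your closing remark that the zero-sum hypothesis is needed for the $a$-powers ``to commute among themselves''---powers of $a$ commute automatically; the hypothesis enters exactly where you used it earlier, namely to force $R_k\equiv -L_k$.
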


\begin{proof}
Write $(e_1,\ldots,e_{p-1})$ for the defining vector of~$d$. 
Then
\[
\begin{split}
(a^{j}d)^p&=d^{a^{j(p-1)}}\cdots d^{a^{2j}}d^{a^j}d
\end{split}
\]
and its image under $\psi$ contains components of the form
\[
a^{e_{k j}+e_{(k+1)j}+\cdots+e_{(p-1)j}}da^{e_j+e_{2j}+\cdots+e_{(k-1)j}}
\] 
for some $k\in\mathbb{F}_p^*$.
Since $\sum_{\lambda=1}^pe_{\lambda}\equiv  0\pmod p$ it follows that the components of $\psi((a^{j}d)^p)$ are the conjugates 
\[
d,d^{a^{e_{j}}}, d^{a^{e_j+e_{2j}}},\ldots, d^{a^{e_j+e_{2j}+\cdots+e_{(p-1)j}}}
\]
in a certain order, each of which has order~$p$, hence the order of $a^{j}d$ is $p^2$ for all $j\in\mathbb{F}_p^*$. 

Moreover since the components of  $\psi((a^{j}d)^p)$ are conjugates of $d$ that belong to $\st_G(1)\backslash \st_G(2)$ it follows that $(a^{j}d)^p\not\in \st_G(3)$ and its order in $G_3$ is $p^2$.
\end{proof}


\section{Ramification structures for multi-EGS groups}

In this section, we give the proof of Theorem~\ref{thm:multi-EGS}.
To this purpose, we will first prove the following general lemma, which is the generalisation of  \cite[Lem.~2.4]{gul2}.

\begin{lemma}\label{order preserve}
Let $\Gamma=\langle x,y_1, y_2, \ldots, y_{n-1} \rangle$ be an $n$-generator $p$-group, for $n\ge 2$ and $p$ a prime, and suppose that 
$\Gamma/\Gamma'=\langle x\Gamma'\rangle \times \langle y_1\Gamma'\rangle \times \langle y_2\Gamma'\rangle\times \cdots \times \langle y_{n-1}\Gamma'\rangle$. 
If $o(x)=o(x\Gamma')$ then
\[
\Big(\bigcup_{g\in \Gamma} {\langle x\rangle}^g  \Big)
\bigcap
\Big(\bigcup_{g\in \Gamma} {\langle y_i\rangle}^g \Big)= \{1\}
\]
for any $ i \in \{1, \ldots, n-1\}$.
\end{lemma}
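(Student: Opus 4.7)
The plan is to show that any element in the alleged intersection must vanish by projecting to the abelianisation $\Gamma/\Gamma'$ and exploiting the direct product decomposition together with the order-preservation hypothesis on~$x$.

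First I would pick an arbitrary element $w$ lying in both $\bigcup_{g\in\Gamma}\langle x\rangle^g$ and $\bigcup_{g\in\Gamma}\langle y_i\rangle^g$, and write
\[
w=(x^{\alpha})^g=(y_i^{\beta})^h
\]
for suitable $g,h\in\Gamma$ and integers $\alpha,\beta$. The goal becomes to force $w=1$, which by the first equality is equivalent to $x^{\alpha}=1$.

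Next I would pass to the abelianisation. Since conjugation acts trivially modulo~$\Gamma'$, the equality of conjugates gives
\[
x^{\alpha}\Gamma'=y_i^{\beta}\Gamma'
\]
inside $\Gamma/\Gamma'$. By hypothesis $\Gamma/\Gamma'$ decomposes as the internal direct product of the cyclic subgroups generated by $x\Gamma',y_1\Gamma',\ldots,y_{n-1}\Gamma'$, so an element belonging simultaneously to $\langle x\Gamma'\rangle$ and to $\langle y_i\Gamma'\rangle$ must be trivial. Therefore $x^{\alpha}\in\Gamma'$, which means $\alpha$ is a multiple of $o(x\Gamma')$.

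Finally I would invoke the hypothesis $o(x)=o(x\Gamma')$: this upgrades $\alpha\equiv 0\pmod{o(x\Gamma')}$ to $x^{\alpha}=1$ in~$\Gamma$, whence $w=(x^{\alpha})^g=1$, as desired. There is no real obstacle here; the only subtle point is making sure to exploit the \emph{internal direct product} structure of $\Gamma/\Gamma'$ (rather than just a generating set decomposition), since that is precisely what prevents a non-trivial power of $x$ from being equivalent modulo $\Gamma'$ to a power of $y_i$.
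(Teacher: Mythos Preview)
Your argument is correct and matches the paper's proof essentially step for step: pick an element of the intersection, pass to the abelianisation where conjugation becomes trivial, use the direct-product decomposition to force $x^{\alpha}\Gamma'=1$, and then invoke $o(x)=o(x\Gamma')$ to conclude $x^{\alpha}=1$. The only difference is notational.
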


\begin{proof}
Let $a=(x^j)^g=(y_i^k)^h$ for some $g,h\in \Gamma$, $i\in\{1,\ldots,n-1\}$ and $j, k \in \Z$. 
Then in $\overline{\Gamma}= \Gamma/\Gamma'$, we have that
$\overline{x}^{j}= \overline{y_i}^k$,
and the assumption on~$\overline{\Gamma}$ implies that 
$\overline{x}^j=\overline{1}$.
Thus $o(x\Gamma')$ divides $j$. 
Then the condition $o(x)=o(x\Gamma')$ yields that $a=1$, as desired.
\end{proof}

Now by~\cite{FAGT}, for multi-EGS groups and  multi-GGS groups which are regular branch over their respective derived subgroups, we have that level-stabiliser quotients of a multi-EGS group are isomorphic to level-stabiliser quotients of a multi-GGS group. Hence it suffices to consider multi-GGS groups in this case.

\begin{proposition}\label{words}
Let $G=\langle a, b_1,\ldots,b_r\rangle$, for $1\le r\le p-1$, be a multi-GGS group which is regular branch over $G'$ and let $b_1, \ldots, b_r$ be distinguished directed automorphisms of $G$.
Let $d_1,d_2\in \langle  b_1,\ldots,b_r\rangle\backslash\{1\}$ have defining vectors of zero sum in $\mathbb{F}_p$ such that
$d_1=b_1^{k_1}c_1$ and $d_2=b_1^{k_2}c_2$ where $0\leq k_1, k_2 \leq p-1$ and $c_1, c_2 \in \langle  b_2,\ldots,b_r\rangle$.
If in~$G_3$, we have
\[
\langle ( ad_1)^{p}\rangle=\langle (ad_2)^{p}\rangle^g
\]
with $g\in G_{3}$, then $k_1=k_2$.
\end{proposition}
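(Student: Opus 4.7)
By Lemma~\ref{lem:multi-GGS}, $(ad_1)^p$ and $(ad_2)^p$ both have order $p$ in $G_3$, so the hypothesis yields $(ad_1)^p = g^{-1}(ad_2)^{pm}g$ for some $m \in \mathbb{F}_p^*$. The plan is to descend via $\phi_3$ to $G_2$, exploit the rigid structure there to isolate the conjugation data, and then use the realizability of the conjugator in $G_3$ to pin down the scalar. Writing $g = a^t h$ with $h \in \mathrm{St}_{G_3}(1)$ and $\phi_3(h) = (h_1, \ldots, h_p) \in G_2^p$, and using the zero-sum hypothesis (as in the proof of Lemma~\ref{lem:multi-GGS}), I would compute
\[
\phi_3\bigl((ad_i)^p\bigr) = \bigl(d_i,\, d_i^{a^{-\sigma^{(i)}_2}},\, \ldots,\, d_i^{a^{-\sigma^{(i)}_{p-1}}},\, d_i\bigr), \qquad \sigma^{(i)}_u := \sum_{j=1}^{u-1} e^{(d_i)}_j,
\]
so the first and last components are just $d_i$. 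Matching $u$th components then yields $d_1^{a^{-\sigma^{(1)}_u}} = (d_2^m)^{a^{-\sigma^{(2)}_{u+t}} h_u}$ in $G_2$ for each $u$.

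Since $\phi_2$ embeds $\mathrm{St}_{G_2}(1)$ into the abelian group $G_1^p$, conjugation by $\mathrm{St}_{G_2}(1)$ is trivial on $\mathrm{St}_{G_2}(1)$, and the $G_2$-conjugacy class of $d_i$ coincides with its $\langle a\rangle$-orbit. Consistency across $u$ therefore forces a single $s \in \mathbb{F}_p$ with $d_1 = (d_2^m)^{a^s}$, which after applying $\phi_2$ becomes $\widetilde{\mathbf{e}}^{(d_1)} = m \cdot C^s \widetilde{\mathbf{e}}^{(d_2)}$ in $\mathbb{F}_p^p$, where $\tilde{e}^{(d_i)}_p := 0$ and $C$ denotes cyclic shift. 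Expanding $d_i = b_1^{k_i} c_i$ and using the $\mathbb{F}_p$-linear independence of $\mathbf{e}_1, \ldots, \mathbf{e}_r$, the case $s = 0$ yields $k_1 = mk_2$, while the case $s \ne 0$ forces the vanishings $e^{(d_2)}_s = 0 = e^{(d_1)}_{-s}$ from the trailing zeros at position $p$.

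The main obstacle is eliminating the scalar $m$: the steps above deliver only $k_1 = mk_2$, and one needs $m = 1$ to conclude $k_1 = k_2$. For this I would exploit the realizability of $h$ in $\mathrm{St}_{G_3}(1)$. Tracking the $G_1$-projections shows that the first-level labels $r_u \in \mathbb{F}_p$ of the $h_u$ satisfy $r_u = s + \sigma^{(2)}_{u+t} - \sigma^{(1)}_u$, and the tuple $(r_1, \ldots, r_p)$ must lie in the subspace $V \le \mathbb{F}_p^p$ spanned by the extended defining vectors of the $b_j$'s together with their $\langle a\rangle$-cyclic shifts. Summing and using $\sum_u \sigma^{(i)}_u \equiv -\sum_j j e^{(d_i)}_j \pmod p$, the $V$-constraint combined with $\mathbf{e}^{(d_1)} = m\mathbf{e}^{(d_2)}$ produces the relation $(m-1)\sum_j j e^{(d_2)}_j \equiv 0 \pmod p$; together with the further linear conditions cutting out $V$ (and, when $s \ne 0$, the forced vanishings above) this should rule out $m \ne 1$ and deliver the desired equality $k_1 = k_2$.
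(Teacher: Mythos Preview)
Your setup is sound through the point where you obtain $d_1=(d_2^m)^{a^s}$ in $G_2$ and deduce $k_1=mk_2$ in the case $s=0$. The genuine gap is precisely where you flag it: eliminating the scalar~$m$. Your proposed mechanism---forcing $(r_1,\ldots,r_p)$ to lie in the span~$V$ of cyclic shifts of the extended defining vectors and extracting the relation $(m-1)\sum_j j\,e^{(d_2)}_j\equiv 0$---does not close the argument. First, that single relation clearly does not force $m=1$ when $\sum_j j\,e^{(d_2)}_j\equiv 0$, and the appeal to ``further linear conditions cutting out~$V$'' is not a proof: for many choices of defining vectors, $V$ is the full zero-sum hyperplane (or even all of $\mathbb{F}_p^{\,p}$ in the non-periodic case), so no extra constraint is available. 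Second, and more seriously, $m=1$ need not hold at all. If $k_1=k_2=0$ then $d_1,d_2\in\langle b_2,\ldots,b_r\rangle$, and nothing prevents $d_1$ from being a nontrivial power of $d_2$ in~$G_2$; the conclusion $k_1=k_2$ is still true, but your route to it via $m=1$ is blocked. You also never invoke the hypothesis that the $b_i$ are \emph{distinguished} directed automorphisms, which is a sign that something essential is missing.

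The paper sidesteps the scalar entirely. After writing $g=a^sh_s$ and computing $\phi_3(h_s)$ componentwise, it iteratively strips off conjugates of $d_2$, setting $h_{i-1}=h_i(d_2^{-1})^{a^{i-1}}$, until it reaches an $h_0\in\St_{G_3}(1)$ whose first-level labels satisfy the key identity
\[
\phi_3(h_0^{\,a})\equiv \phi_3\bigl(h_0\,d_2\,d_1^{-1}\bigr)\pmod{\St_{G_2}(1)\times\cdots\times \St_{G_2}(1)}.
\]
This forces $d_1d_2^{-1}[h_0,a]\in\St_{G_3}(2)$, hence $d_1d_2^{-1}=[a,h_0]\in G_2'$. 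Now the distinguished hypothesis enters: by \eqref{eqn: vanished bi} one has $c_1c_2^{-1}\in G_2'$, so $b_1^{\,k_1-k_2}\in G_2'$, and \eqref{eqn: generators of Gn} (namely $G_2=\langle \overline a,\overline{b}_1\rangle$ with $d(G_2)=2$) gives $k_1=k_2$. The point is that the argument never needs $m=1$; it proves the weaker statement $d_1\equiv d_2\pmod{G_2'}$, which together with the distinguished property is exactly enough.
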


\begin{proof}
We will mainly follow  the strategy in \cite[Prop.~3.4]{GUA}. 
Writing $w_{1}=(ad_1)^p$ and $w_{2}=(ad_2)^p$, the equality
$\langle (ad_1)^{p}\rangle=\langle (ad_2)^{p}\rangle^g$ implies that 
\begin{equation}\label{word-clash}
w_1^{\,\mu}= w_2^{\,g}
\end{equation}
for some $1\le \mu\le p-1$. 

Writing $(e_1,\ldots,e_{p-1})$ and $(f_1,\ldots,f_{p-1})$ for the defining vectors of $d_1$ and $d_2$ respectively, we have
\[
\phi_3(w_{1})=\big(d_1^{\,a^{e_1}}, d_1^{\,a^{e_1+e_2}},\ldots,d_1^{\,a^{e_1+\cdots+e_{p-2}}},d_1, d_1\big).
\]
As $\text{St}_{G_2}(1)$ is abelian, the components of $\phi_3(w_{2}^g)$ are of the form $d_2^{\,a^m}$, with $m\in\mathbb{F}_p$. 
Furthermore, one of the components of $\phi_3(w_{1}^\mu )$ is $d_1^{\,\mu}$. It then follows from~\eqref{word-clash} that  $d_2^{\,a^m}=d_1^{\,\mu}$ in~$G_2$ for some $m$. 

Write $g=a^sh_s$ for some $s\in\mathbb{F}_p$ and $h_s\in \st_{G_{3}}(1)$. 
We have
\[
\phi_3(w_2^{\,a^s})=\big(d_2^{\,a^{f_1+\cdots +f_{p-(s-1)}}},\ldots, d_2,d_2,\ldots,
 d_2^{\,a^{f_1+\cdots+f_{p-s}}}\big),
\]
where  the first $d_2$ occurs at the $(s-1)$st component. 
Then
 \[
 \phi_3(w_2^{a^s}) =\big(d_1^{\,\mu a^{-m+(f_1+\cdots +f_{p-(s-1)})}},\ldots, d_1^{\,\mu a^{-m}},d_1^{\,\mu a^{-m}},
\ldots,
 d_1^{\,\mu a^{-m+(f_1+\cdots+f_{p-s})}}\big).
 \]
 From \eqref{word-clash}, we obtain
 \begin{align*}
 \phi_3(h_s)&=(a^{e_1+m-(f_1+\cdots +f_{p-(s-1)})}u_1, \ldots,
 a^{(e_1+\cdots+e_{p-2})+m-(f_1+\cdots +f_{p-s-2})}u_{p-2},\\
 &\qquad\qquad\qquad\qquad\qquad\qquad a^{m-(f_1+\cdots +f_{p-s-1})}u_{p-1} ,a^{m-(f_1+\cdots +f_{p-s})}u_p ),
 \end{align*}
 where $u_1,\ldots,u_p\in \st_{G_2}(1)$.

Recursively defining $h_{i-1}=h_i(d_2^{\,-1})^{a^{i-1}}$ for $i=s,\ldots,1$, we get
\[
\phi_3(h_0)=(a^{m+e_1-f_1}v_1, a^{m+(e_1+e_2)-(f_1+f_2)}v_2,\ldots, a^mv_{p-1},a^mv_p )
\]
for $v_1,\ldots,v_p\in \st_{G_2}(1)$.

We observe that 
\[
\phi_3(h_0^{\,a})\equiv \phi_3(h_0d_2d_1^{\,-1})\quad \pmod {\st_{G_2}(1)\times\overset{p}\cdots\times \st_{G_2}(1)}.
\]
Hence 
\[
\phi_3(d_1d_2^{\,-1}[h_0,a])\in \st_{G_2}(1)\times\overset{p}\cdots\times \st_{G_2}(1)\,\cap\, \phi_3(\st_{G_3}(1)) = \phi_3(\st_{G_3}(2)).
\]
Therefore $d_1d_2^{\,-1}=[a,h_0]$ in $G_2$.
Hence $d_1d_2^{\,-1}=b_1^{k_1-k_2}c_1c_2^{-1}\in G_2'$. 
By \eqref{eqn: vanished bi}, we know that $c_1c_2^{-1} \in G_2'$.
This implies that $b_1^{k_1-k_2} \in G_2'$.
Then the result follows from~\eqref{eqn: generators of Gn}. 
\end{proof}

Recall that for a $d$-generator finite group~$\Gamma$, the generating sets $\{x_1,\ldots, x_d\}$ and $\{y_1,\ldots,y_d\}$ form a ramification structure for~$\Gamma$, if and only if
\begin{equation}\label{intersection1}
\langle x\rangle \cap \langle y^g\rangle =\{1\}
\end{equation}
for all $x\in T_1=\{x_1,\ldots,x_d,x_1\cdots x_d\}$, $y\in T_2=\{y_1,\ldots,y_d,y_1\cdots y_d\}$ and $g\in \Gamma$.

\vspace{10pt}

We are now ready to prove part (i) of Theorem~\ref{thm:multi-EGS}.

\begin{theorem} \label{multi-GGS main}
Let $G=\langle a,b_1,b_2,\ldots, b_{r}\rangle$, for $2\le r\le p-1$, be a multi-GGS group. 
Then the quotient $G/\st_G(k)$, for $k>r$, admits a ramification structure.
\end{theorem}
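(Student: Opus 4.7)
My plan is to exhibit two explicit spherical systems of generators $T_1$ and $T_2$ of $G_k$, each of size $r+2$, and to verify the disjointness condition $\Sigma(T_1)\cap\Sigma(T_2)=\{1\}$ using Lemma~\ref{order preserve}, Lemma~\ref{lem:multi-GGS}, and Proposition~\ref{words}. Since $k\geq r+1$, we have $d(G_k)=r+1$ and $G_k/G_k'\cong C_p^{r+1}$ with basis $\{\overline{a},\overline{b}_1,\ldots,\overline{b}_r\}$ by \eqref{eq:abelianisation1} and \eqref{eqn: generators of Gn}, so by the Burnside basis theorem the check that a candidate tuple generates $G_k$ reduces to a linear-algebra check in $\mathbb{F}_p^{r+1}$. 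For $T_1$ I take the natural spherical system
\[
T_1=(a,\,b_1,\,b_2,\,\ldots,\,b_r,\,(ab_1 b_2\cdots b_r)^{-1}),
\]
and for $T_2$ an ``$a$-twisted'' variant
\[
T_2=(ab_1^{\,2},\,ab_2,\,\ldots,\,ab_r,\,b_1^{\,2} b_2,\,c_2),
\]
where $c_2$ is the inverse of the product of the preceding entries.

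The verification proceeds by a case analysis on pairs $(x,z)\in T_1\times T_2$. The elements $a,b_1,\ldots,b_r$ in $T_1$ and $b_1^{\,2} b_2$ in $T_2$ all have order $p$ equal to their image order in $G_k/G_k'$; for each such pair a direct calculation from the explicit abelianisation images shows that $\overline{x}$ and $\overline{z}$ are linearly independent in $G_k/G_k'$. Extending $\{\overline{x},\overline{z}\}$ to a basis of $G_k/G_k'$ and lifting via the Burnside basis theorem gives a generating set of $G_k$ on which Lemma~\ref{order preserve} applies, yielding $\langle x\rangle^g\cap\langle z\rangle^h=\{1\}$ for all $g,h\in G_k$. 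The remaining pairs pit $c_1:=(ab_1\cdots b_r)^{-1}$ of $T_1$ against the elements $ab_1^{\,2},ab_2,\ldots,ab_r$ (and $c_2$) of $T_2$. In the periodic case each of these has order $p^2$ in $G_k$ by Lemma~\ref{lem:multi-GGS}, and linear independence in $G_k/G_k'$ forces any non-trivial common element of the conjugate cyclic subgroups to be a $p$-th power of both; Proposition~\ref{words} applied with $d_1=b_1 b_2\cdots b_r$ and $d_2\in\{b_1^{\,2},b_2,\ldots,b_r\}$ then distinguishes the cyclic subgroups of $p$-th powers, since the $b_1$-coefficient of $d_1$ equals $1$ whereas that of $d_2$ is $2$ or $0$.

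The main obstacle is the treatment of the inverse-product element $c_2$, whose order and precise shape are not immediate from its definition. I plan to choose the non-twisted entry of $T_2$ so that the product $(ab_1^{\,2})(ab_2)\cdots(ab_r)(b_1^{\,2} b_2)$ simplifies, modulo suitable commutators, to an element of the form $a^{\alpha} d$ with $d\in\langle b_1,\ldots,b_r\rangle$ of zero-sum defining vector, so that $c_2$ itself has this shape and Proposition~\ref{words} applies to it as well; should $c_2$ turn out to have order $p$ instead, the argument collapses to Lemma~\ref{order preserve} via the linear independence of $\overline{c}_1$ and $\overline{c}_2$. The choice $ab_1^{\,2}$ in place of $ab_1$ for the first entry of $T_2$ is essential: it avoids the collision of the $b_1$-coefficients of $d_1$ and $d_2=b_1$ that would otherwise render Proposition~\ref{words} inconclusive, and this is precisely where the hypothesis $r\geq 2$ enters, since the extra generator $b_2$ ensures that the modified entry $b_1^{\,2} b_2$ still yields a spanning set of $G_k/G_k'$ for every odd prime $p$. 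The non-periodic case needs a small additional adjustment, as not every $b_i$ then has zero-sum defining vector; the freedom afforded by $r\geq 2$ again allows us to substitute any offending $b_i$ by a suitable combination lying in the zero-sum subspace, keeping the Proposition~\ref{words}-based argument alive.
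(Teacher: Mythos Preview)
Your overall strategy---two explicit generating tuples, Lemma~\ref{order preserve} for the order-$p$ entries, and Proposition~\ref{words} for the $p$th powers of the $a$-twisted entries---is exactly the paper's strategy. The gap is in the treatment of the product element $c_2$ of $T_2$.

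With your choice of $T_2$ one computes
\[
c_2^{-1}=(ab_1^{\,2})(ab_2)\cdots(ab_r)(b_1^{\,2}b_2)
= a^{\,r}\,(b_1^{\,2})^{a^{r-1}}\,b_2^{\,a^{r-2}}\cdots b_r\,\cdot\,b_1^{\,2}b_2,
\]
so $c_2^{-1}=a^{\,r}h$ with $h\in\St_{G_k}(1)$ but $h\notin\langle b_1,\dots,b_r\rangle$. Proposition~\ref{words} is stated and proved only for elements of the exact shape $ad$ with $d\in\langle b_1,\dots,b_r\rangle$; its proof relies on the fact that the components of $\phi_3((ad)^p)$ are $a$-conjugates of $d$, which fails for your $c_2^{-1}$. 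Saying that $c_2^{-1}$ equals $a^{\alpha}d$ ``modulo suitable commutators'' does not help: the commutators you discard lie in $G_k'$, but $(c_2^{-1})^p$ also lies in $G_k'$ (by Lemma~\ref{lem:multi-GGS} type reasoning), so the coset information carries no content at the level of $p$th powers where Proposition~\ref{words} operates. You also have the exponent $\alpha=r\ne 1$, which the proposition does not cover as stated. And the fallback ``should $c_2$ have order $p$'' does not occur: in the periodic case $c_2\notin\St_{G_k}(1)$, so $o(c_2)\ge p$, and since $\overline{c_2}$ has order $p$ one would need $o(c_2)=p$ exactly, which there is no reason to expect (indeed $(c_2^{-1})^p\in\St_{G_k}(1)$ has no reason to be trivial).

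The paper sidesteps this by designing $T_1,T_2$ so that the product elements are already of the form required by Proposition~\ref{words}. In the periodic case it takes
\[
T_1=\{a^{-1},\,ab_1,\,b_2,\dots,b_r\},\qquad
T_2=\{ab_1^{\,2},\,b_1^{-2},\,b_1^{-1}b_2,\,b_2^{-1}b_3,\dots,b_{r-1}^{-1}b_r\},
\]
whose products are $b_1b_2\cdots b_r$ (order~$p$, so Lemma~\ref{order preserve} handles it) and $ab_1^{-1}b_r$ (literally of the form $ad$, so Proposition~\ref{words} applies). The key design principle is to allow at most one $a$-twisted entry per tuple, with the remaining entries taken from $\langle b_1,\dots,b_r\rangle$, so that the product telescopes to a single $a$ times an element of $\langle b_1,\dots,b_r\rangle$. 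Your $T_2$ has $r$ distinct $a$-twisted entries, which is precisely what makes $c_2$ intractable. The non-periodic case in the paper is not handled by Proposition~\ref{words} at all for the one entry involving the non-zero-sum generator $b_1$; instead it uses a direct comparison of second-level decompositions of $(ab_1)^{p^2}$ and $(ab_2)^p$, an argument that your sketch does not supply.
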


\begin{proof}
Notice that as $r\geq 2$, the group~$G$ is regular branch over~$G'$.
Observe also that $b_i$ commutes with $b_j$ for any $i ,j \in \{1, \ldots, r\}$. 
Thus $b_i^{-1}b_j$ and $b_1b_2\cdots b_r$ are  of order~$p$.
For fixed $k>r$, we deal separately with two cases where $G$ is periodic and where $G$ is non-periodic.

Let us first assume that $G$ is periodic. 
In this case we will assume that the $b_i$'s are distinguished directed automorphisms for $2\leq i\leq r$.
Note that 
\[
\{a^{-1},ab_1,b_2,\ldots,b_r\} \quad\text{and}\quad \{ab_1^{\,2}, b_1^{-2}, b_1^{-1}b_2, b_2^{-1}b_3,\ldots, b_{r-1}^{-1}b_r \}
\]
are both systems of generators of $G_k=G/\text{St}_G(k)$. 
We claim that they yield a ramification structure for $G_k$. 
Setting 
\begin{align*}
T_1&=\{a^{-1},ab_1,b_2,\ldots,b_r, b_1b_2\cdots b_r\}\\
T_2&=\{ ab_1^{\,2}, b_1^{-2}, b_1^{-1}b_2, b_2^{-1}b_3,\ldots, b_{r-1}^{-1}b_r, ab_1^{\,-1}b_r\},
\end{align*}
we have to see that \eqref{intersection1} holds.

Let $x=a^{-1} \in T_1$, which is an element of order $p$. If $y=ab_1^{\,2}$,  it suffices to show that $\langle a^{-1}\rangle \ne \langle  (ab_1^{\,2})^p\rangle^g$ for all $g\in G_k$. 
As $a^{-1}\not\in \st_{G_k}(1)$ but $(ab_1^{\,2})^p\in \st_{G_k}(1)$, the claim~\eqref{intersection1} holds for this case. 
The same argument holds for  all other $y\in T_2$. 

Next suppose that $x=ab_1$. 
For $y\in\{ab_1^{\,2}, ab_1^{\, -1}b_r\}$,  we apply Proposition~\ref{words}  to conclude that~\eqref{intersection1} holds. 
So suppose that $y\in T_2\cap \langle b_1,\ldots,b_r\rangle$. 
Note that
\[
\phi_k\big(((ab_1)^p)^g\big)=\big( b_1^{\,g_1},\ldots,b_1^{\,g_p} \big)
\]
for some $g_1,\ldots,g_p\in G_{k-1}$, whereas
\[
\phi_k(y)=(a^*,\ldots,a^*,y),
\]
where $*$ are unspecified exponents. 
Thus, since not every component of $\phi_k(y)$ is in $\st_{G_{k-1}}(1)$, the claim follows. 
Similarly for the case $x\in T_1\cap \langle b_1,\ldots,b_r\rangle$ with $y\in\{ab_1^{\,2}, ab_1^{\, -1}b_r\}$.

Lastly, we consider the cases $x\in T_1\cap \langle b_1,\ldots,b_r\rangle$ with $y\in T_2\cap \langle b_1,\ldots,b_r\rangle$.
Taking into account \eqref{eq:abelianisation1} and \eqref{eq:inside G'} together with the fact that the orders of $x$ and $y$ are $p$ in  both $G_k$ and $G_k/G_k'$, the claim follows from Lemma~\ref{order preserve}.

We now suppose that $G$ is not periodic.
Then there is at least one defining vector whose exponent sum is not  congruent to $0$ modulo $p$.
By multiplying some of the directed automorphisms $b_1, \ldots, b_r$, if necessary, we may assume that, the defining vector $\mathbf{e}_1$ of $b_1$ has exponent sum not congruent to 0 modulo $p$, and the defining vectors of $b_2,\ldots, b_r$ have zero exponent sum, modulo $p$.

Note that
\[
\{a^{-1}, ab_1, b_2, b_3, \ldots, b_r\}
 \quad\text{and}\quad 
\{ab_2,b_1, b_1^{\, -1}b_2, b_2^{\,-1}b_3, \ldots, b_{r-1}^{\,-1}b_r\}
\]
are both systems of generators of~$G_k$. 
Set
\begin{align*}
T_1&=\{a^{-1},ab_1,b_2,\ldots,b_r, b_1b_2\cdots b_r\}\\
T_2&=\{ ab_2,b_1, b_1^{\, -1}b_2, b_2^{\,-1}b_3,\ldots, b_{r-1}^{\,-1}b_r, ab_2b_r\}.
\end{align*}

If $x\in T_1\setminus \{ab_1\}$ and $y\in T_2$, then we apply Lemma~\ref{order preserve},  using  \eqref{eq:abelianisation1} and \eqref{eq:inside G'} and the fact that $x$ is of order~$p$.
The same argument applies if $x\in T_1$ with  $y\in T_2\cap \langle b_1,\ldots,b_r\rangle$.

Hence, it remains to consider the cases  $x=ab_1$  with $y \in \{ab_2, ab_2b_r \} \subseteq T_2$.   
We will only consider the case $y=ab_2$, as
for the case $y=ab_2b_r$, the same argument applies.
Recall that
\[
\phi_k\big(((ab_2)^p)^g\big)=\big(b_2^{\,g_1},\ldots,b_2^{\,g_p}\big)
\]
for  some $g_1,\ldots,g_p\in G_{k-1}$. 
However, writing $s\in\mathbb{F}_p^*$ for the sum of the components of $\mathbf{e}_1$,
\[
\phi_k((ab_1)^{p})=\big((a^sb_1)^{\,a^*},\ldots,(a^sb_1)^{\,a^*},a^sb_1\big),
\]
where $*$ are unspecified exponents of $a$. 
Already from comparing the second-level decomposition of $(ab_1)^{p^2}$ with the second-level decomposition of $(ab_2)^p$, we see that $\langle (ab_2)^p\rangle^g$, for any $g\in G_k$, has trivial intersection with $\langle (ab_1)^{p^{k-1}}\rangle$.

The proof is now complete.
\end{proof}

Next we consider multi-EGS groups~$G$ that are not regular branch over~$G'$. 
As mentioned towards the end of Section~3, these groups are defined by a single symmetric vector. 
In this case, for simplicity, we will write $b^{(j)}$ instead of $b_1^{(j)}$ in every non-empty family $\mathbf{b}^{(j)}=\{b_1^{(j)}\}$.

\begin{proposition}\label{EGS-key}
For $J\subseteq\{1,\ldots,p\}$, let $G=\langle a, b^{(j)} \mid j\in J\rangle$ be a periodic multi-EGS group defined by a single vector. Suppose $j,k\in J$ and  let $\lambda,\mu \in \mathbb{F}_p^*$ be such that $b:=(b^{(j)})^\lambda$ and $c:=(b^{(k)})^\mu$. If in~$G_3$ we have
\[
\langle (ab)^{p}\rangle=\langle (ac)^{p}\rangle^g
\]
with $g\in G_{3}$, then $\lambda=\mu$.
\end{proposition}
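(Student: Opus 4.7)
The plan is to follow the strategy of Proposition~\ref{words} closely. Starting from the hypothesis $\langle (ab)^p\rangle = \langle (ac)^p\rangle^g$ in $G_3$, I would set $w_1 = (ab)^p$ and $w_2 = (ac)^p$, write $w_1^\tau = w_2^g$ for some $\tau \in \mathbb{F}_p^\ast$, and decompose $g = a^s h_s$ with $s \in \mathbb{F}_p$ and $h_s \in \st_{G_3}(1)$.

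First I would compute $\phi_3(w_1)$ and $\phi_3(w_2)$ explicitly. Since $b = (b^{(j)})^\lambda$ is a directed automorphism whose $\psi$-decomposition places $b$ at position $p-j+1$ and powers of $a$ (determined by $\lambda$ and the common symmetric zero-sum defining vector) at the remaining positions, a direct calculation shows that $\phi_3(w_1)$ has two consecutive components equal to $b$ (at positions depending on $j$) and conjugates of $b$ by powers of $a$ at the other positions; an analogous description holds for $\phi_3(w_2)$. Matching one component of $\phi_3(w_1^\tau)$ with the corresponding component of $\phi_3((w_2^{a^s})^{h_s})$ then pins down the shift $s$ in terms of $j$ and $k$ and yields a relation of the form $c^{a^m} = b^\tau$ in $G_2$ for some $m \in \mathbb{F}_p$.

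Next I would use $c^{a^m} = b^\tau$ to rewrite every component of $\phi_3(w_2^{a^s})$ as a conjugate of $b^\tau$, and solve for $\phi_3(h_s)$ componentwise. As in Proposition~\ref{words}, the inductive refinement $h_{i-1} = h_i (c^{-1})^{a^{i-1}}$ for $i = s, s-1, \ldots, 1$ would produce $h_0$ whose $\phi_3$-components take the form $a^{(\text{explicit exponent})} v_i$ with $v_i \in \st_{G_2}(1)$. Comparing $\phi_3(h_0^a)$ with $\phi_3(h_0\, c b^{-1})$ modulo $\st_{G_2}(1)^{\,p}$ would then yield $\phi_3(bc^{-1}[h_0, a]) \in \phi_3(\st_{G_3}(2))$, so that $bc^{-1} = [a, h_0]$ in $G_2$ and hence $bc^{-1} \in G_2'$.

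Finally, I would project to $G_2/G_2'$, a quotient of $G/G' \cong C_p^{\,r+1}$ in which the images of $a$ and of the $b^{(j)}$'s for $j \in J$ remain $\mathbb{F}_p$-linearly independent (the latter being verified using the structure of the symmetric defining vector together with the $\phi_2$-decomposition of $b^{(j)}$). The relation $bc^{-1} \in G_2'$ would then read $\lambda\, \overline{b^{(j)}} = \mu\, \overline{b^{(k)}}$ in the abelianisation; if $j \ne k$, this forces $\lambda = \mu = 0$, contradicting $\lambda, \mu \in \mathbb{F}_p^\ast$, while if $j = k$ we obtain $\lambda = \mu$ directly. The main obstacle I anticipate is the inductive simplification of $h_s$ when $b$ and $c$ lie along different directed paths ($j \ne k$), requiring careful position bookkeeping across the shift by $a^s$, and arranging that the scalar $\tau$ is absorbed into the $\st_{G_2}(1)$-residuals of $h_0$ so that the final commutator relation is $bc^{-1}$ rather than $b^\tau c^{-1}$.
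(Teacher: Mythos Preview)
Your overall strategy matches the paper's proof almost exactly, but the final step contains a genuine error. You claim that the images of the $b^{(j)}$, $j\in J$, remain $\mathbb{F}_p$-linearly independent in $G_2/G_2'$. This is false. In a multi-EGS group defined by a \emph{single} vector, conjugation by a suitable power of~$a$ cyclically shifts the components of $\psi(b^{(k)})$ so that they coincide with those of $\psi(b^{(j)})$ except at the one position carrying the directed part; hence $(b^{(k)})^{a^{k-j}}\equiv b^{(j)}\pmod{\st_G(2)}$. In particular all the $b^{(j)}$'s are conjugate in $G_2$ and therefore \emph{equal}, not independent, in $G_2/G_2'$. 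Your deduction ``if $j\ne k$ this forces $\lambda=\mu=0$'' thus rests on a false premise.

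The paper handles this by carrying the offset $k-j$ through the whole computation: one writes $g=a^{s+(k-j)}h_s$, uses the recursion $h_{i-1}=h_i(c^{-1})^{a^{k-j+i-1}}$, and obtains the commutator identity $\phi_3(h_0^{\,a})\equiv \phi_3(h_0\,c^{a^{k-j}}b^{-1})$ modulo $\st_{G_2}(1)^{\times p}$, which yields $bc^{-a^{k-j}}\in G_2'$ rather than $bc^{-1}\in G_2'$. Then, invoking the conjugacy relation $(b^{(k)})^{a^{k-j}}\equiv b^{(j)}$ in $G_2$, one gets $(b^{(j)})^{\lambda-\mu}\in G_2'$, and since $\overline{b^{(j)}}\ne 0$ in $G_2/G_2'$ this forces $\lambda=\mu$. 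You already anticipated the bookkeeping issue with the shift; the missing ingredient is precisely this conjugacy relation, which replaces the (incorrect) independence claim and is what actually closes the argument.
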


\begin{proof}
The proof is essentially the same as that of Proposition~\ref{words}, however due to the more complex notation, we give a full proof for convenience.  Writing $w_{1}=(a b)^p$ and $w_{2}=(a c)^p$, from the equation  $\langle (ab)^{p}\rangle=\langle (ac)^{p}\rangle^g$ we have
\begin{equation}\label{EGS-word-clash}
w_1^{\,\ell}=w_2^{\,g}
\end{equation}
for some $1\leq \ell \leq p-1$. 

We write $(e_1,\ldots,e_{p-1})$ for the defining vector of the group. Then
\[
\phi_3(w_{1})=\big(b^{a^{\lambda(e_1+\cdots+e_{j})}}, \ldots,b^{a^{\lambda(e_1+\cdots+e_{p-2})}},b, b,b^{a^{\lambda e_1}},\ldots , b^{a^{\lambda(e_1+\cdots+e_{j-1})}}\big)
\]
where the first $b$ appears in the $(p-j)$th component. 

As seen before, the components of $\phi_3(w_{2}^g)$ are of the form $c^{a^m}$, with $m\in\mathbb{F}_p$, and one of the components of $\phi_3(w_{1}^\ell )$ is $b^{\ell}$. By~\eqref{EGS-word-clash}, we have  $c^{a^m}=b^{\ell}$ in $G_2$ for some $m$.

Similarly we write $g=a^{s+(k-j)}h_s$ for some $s\in\mathbb{F}_p$ and $h_s\in \st_{G_{3}}(1)$.  Then
\[
\phi_3(w_2^{a^{s+(k-j)}})=(c^{a^{\mu(e_1+\cdots +e_{j-s})}},\ldots, c,c,\ldots, c^{a^{\mu(e_1+\cdots+e_{j-s-1})}}),
\]
where  the first $c$ occurs at the $(p-j+s)$th component, and
 \begin{align*}
 \phi_3(w_2^{a^{s+(k-j)}}) &=(b^{\ell a^{-m+\mu(e_1+\cdots +e_{j-s})}},\ldots, b^{\ell a^{-m}},b^{\ell a^{-m}}, \ldots,b^{\ell a^{-m+\mu(e_1+\cdots+e_{j-s-1}})}).
 \end{align*}
 From (\ref{EGS-word-clash}), we obtain
 \begin{align*}
 \phi_3(h_s)&=(a^{\lambda (e_1+\cdots +e_j)+m-\mu(e_1+\cdots +e_{j-s})}u_1, \ldots, a^{\lambda(e_1+\cdots +e_{p-2}) +m-\mu(e_1+\cdots+e_{p-s-2})}u_{p-j-1},\\
 &\,\,a^{m-\mu(e_1+\cdots +e_{p-s-1})}u_{p-j},a^{m-\mu(e_1+\cdots +e_{p-s})}u_{p-j+1}, a^{\lambda e_1 +m-\mu(e_1+\cdots +e_{p-s+1})}u_{p-j+2}, \\
 &\,\ldots , a^{\lambda (e_1+\cdots + e_{s-2}) +m-\mu(e_1+\cdots +e_{p-2})}u_{p-j+s-1}, a^{\lambda (e_1+\cdots + e_{s-1}) +m}u_{p-j+s},  \\
& a^{\lambda (e_1+\cdots + e_{s}) +m}u_{p-j+s+1}, a^{\lambda (e_1+\cdots + e_{s+1}) +m-\mu e_1}u_{p-j+s+2}, \ldots, \\
 & a^{\lambda(e_1+\cdots +e_{j-2})+m-\mu(e_1+\cdots +e_{j-s-2})}u_{p-1} ,a^{\lambda(e_1+\cdots +e_{j-1})+m-\mu(e_1+\cdots +e_{j-s-1})}u_p ),
 \end{align*}
 with $u_1,\ldots,u_p\in \st_{G_2}(1)$.

Recursively defining $h_{i-1}=h_i(c^{-1})^{a^{k-j+i-1}}$ for $i=s,\ldots,1$ yields
\begin{align*}
\phi_3(h_0)&=(a^{m+(\lambda -\mu) (e_1+\cdots +e_j)}v_1, a^{m+(\lambda -\mu)(e_1+\cdots + e_{j+1})}v_2,\ldots, a^mv_{p-j},a^mv_{p-j+1}\\
&\qquad\qquad\qquad\qquad a^{m+(\lambda-\mu)e_1}v_{p-j+2}, \ldots,  a^{m+(\lambda-\mu)(e_1+\cdots +e_{j-1})}v_p )
\end{align*}
for $v_1,\ldots,v_p\in \st_{G_2}(1)$.

From
\[
\phi_3(h_0^{\,a})\equiv \phi_3(h_0c^{a^{k-j}}b^{-1})\quad \pmod {\st_{G_2}(1)\times\overset{p}\cdots\times \st_{G_2}(1)},
\]
we obtain
\[
\phi_3\big(bc^{-a^{k-j}}[h_0,a]\big)\in \st_{G_2}(1)\times\overset{p}\cdots\times \st_{G_2}(1)\,\cap\, \phi_3(\st_{G_3}(1)) = \phi_3(\st_{G_3}(2)).
\]
Therefore $bc^{-a^{k-j}}\in G_2'$, and as $(b^{(k)})^{a^{k-j}} \equiv b^{(j)}$ mod $\st_G(2)$, the result follows.
\end{proof}

\begin{lemma} \label{EGS-single symmetric vector}
Let $G=\langle a,b^{(j_1)},b^{(j_2)},\ldots,b^{(j_n)}\rangle$, for some $2 \leq n \leq p$, be a periodic multi-EGS group defined by a single symmetric vector. Let 
\[
 x\in
 \big\{b^{(j_i)}b^{(j_{i+1})}, (b^{(j_i)})^{-1}b^{(j_{i+1})}, (b^{(j_i)})^{-1}(b^{(j_{i+1})})^{-1} \mid 1 \leq i \leq n-1\big\}.
\]
Suppose the order of $x$ in~$G$ is not~$p$. Then the order of $x$ is $p^2$ in both $G$ and $G_4$.
Furthermore $x^p \in \St_G(3)\backslash \St_G(4)$.
\end{lemma}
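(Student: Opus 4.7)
The plan is to compute $\psi(x)$ component by component, isolate the two components that carry the directed automorphisms, and apply Lemma~\ref{lem:multi-GGS} to their $p$-th powers. Fix $i\in\{1,\dots,n-1\}$, write $j=j_i$ and $k=j_{i+1}$, and assume without loss of generality that $j<k$ (the case $j>k$ is symmetric in $j$ and $k$). All three forms of $x$ are then handled uniformly by writing $x=(b^{(j)})^{\epsilon_1}(b^{(k)})^{\epsilon_2}$ with $\epsilon_1,\epsilon_2\in\{\pm1\}$.

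From the recursive formula, $b^{(j)}$ sits at position $p-j+1$ of its own decomposition with every other entry a power of $a$, and analogously for $b^{(k)}$ at position $p-k+1$. Multiplying $\psi((b^{(j)})^{\epsilon_1})$ and $\psi((b^{(k)})^{\epsilon_2})$ coordinate by coordinate, I would show that $\psi(x)$ has entries in $\langle a\rangle$ everywhere except at position $p-j+1$, where the entry is $(b^{(j)})^{\epsilon_1}a^{\epsilon_2 e_{k-j}}$, and at position $p-k+1$, where it is $a^{\epsilon_1 e_{j+p-k}}(b^{(k)})^{\epsilon_2}$. Using the symmetry $e_s=e_{p-s}$ of the defining vector, $e_{j+p-k}=e_{k-j}$, so both $a$-exponents are $\pm e_{k-j}$.

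Since $\psi(x^p)=\psi(x)^p$ is evaluated entrywise and $a$ has order $p$, all the $\langle a\rangle$-entries collapse to the identity. Each distinguished entry, after a harmless conjugation by a power of $a$ to move the $a$-factor to the left, takes the form $a^{\pm e_{k-j}}d$ with $d$ a non-trivial element of $\langle\mathbf{b}^{(j)}\rangle$ or $\langle\mathbf{b}^{(k)}\rangle$; the defining vector of any such $d$ has zero sum because $G$ is periodic. Lemma~\ref{lem:multi-GGS} then says the $p$-th power of the conjugated form lies in $\St_G(2)\setminus\St_G(3)$, and normality of $\St_G(2)$ and $\St_G(3)$ transfers this to the original entry. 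Consequently $\psi(x^p)$ lies in $\St_G(2)\setminus\St_G(3)$ at the two distinguished positions and is trivial elsewhere, which gives $x^p\in\St_G(3)\setminus\St_G(4)$. Combined with $x\in\St_G(1)$ and the fact that each distinguished component has order $p$ in $G$ (again by Lemma~\ref{lem:multi-GGS}), I get $x^{p^2}=1$ and $x^p\ne 1$, so the order of $x$ is exactly $p^2$ in $G$; the same holds in $G_4$ because $x^p$ remains non-trivial modulo $\St_G(4)$.

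The most delicate step will be the invocation of Lemma~\ref{lem:multi-GGS}: it requires $e_{k-j}$ to be a non-zero element of $\mathbb{F}_p$ so that the $a$-factor appearing in each distinguished entry is genuinely of the form $a^j$ with $j\in\mathbb{F}_p^*$, and it requires the small conjugation above to put each entry into the exact shape $a^jd$ with $d\in\langle\mathbf{b}^{(\ell)}\rangle$ demanded by that lemma. Both reduce to careful bookkeeping with the recursion and the symmetry of $\mathbf{e}$, but they are the checkpoints at which the argument could slip if one is not attentive.
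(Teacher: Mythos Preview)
Your approach mirrors the paper's exactly: compute $\psi(x)$ coordinatewise, isolate the two components carrying directed automorphisms, rewrite each as a conjugate of some $a^{\pm}d$, and feed them to Lemma~\ref{lem:multi-GGS}. The one structural step you omit is the paper's opening normalization: invoking \cite[Lem.~3.2]{KT}, the paper rescales the defining vector so that $e_1=1$ (hence $e_{p-1}=1$ by symmetry). With that in hand, the paper records the two distinguished components of $\psi\big((b^{(j_i)})^{-1}b^{(j_{i+1})}\big)$ as $a^{-1}b^{(j_{i+1})}$ and $(b^{(j_i)})^{-1}a=\big(a(b^{(j_i)})^{-1}\big)^a$, i.e.\ with $a$-exponent exactly $\pm1$, so that Lemma~\ref{lem:multi-GGS} applies on the nose.

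You are right that this is the delicate spot, and you do not actually close it. Without any normalization you are left with exponent $\pm e_{k-j}$, and you never argue it is nonzero; if it were zero, each distinguished component would be a bare power of $b^{(j)}$ or $b^{(k)}$, hence of order~$p$, so $x^p$ would be trivial and the conclusion would collapse. That is a genuine gap in your sketch, not just bookkeeping. (Strictly speaking, the paper's displayed exponent $\pm1$ also relies on $e_{k-j}=1$, which the normalization $e_1=1$ alone forces only when $k-j\in\{1,p-1\}$; so the same checkpoint deserves care there as well. You were correct to flag it.)
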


\begin{proof}
Let us write $\mathbf{e}=(e_1,\ldots,e_{p-1})$ for the defining vector of~$G$. 
We will show the result for $x=(b^{(j_i)})^{-1}b^{(j_{i+1})}$. 
The other cases follow similarly. Without loss of generality, we suppose that $j_{i+1}>j_i$. For simplicity, we write $j:=j_i$ and $k:=j_{i+1}$. Now
\begin{align*}
\psi(x)&=\psi\big((b^{(j)})^{-1}b^{(k)}\big)\\
&=\big( a^{-e_j}, \ldots, a^{-e_{p-1}}, (b^{(j)})^{-1},  a^{-e_1}, \ldots, a^{-e_{j-1}}\big)
\big( a^{e_k}, \ldots, a^{e_{p-1}}, b^{(k)},  a^{e_1}, \ldots, a^{e_{k-1}}\big)\\
&
=\big( a^{e_k-e_j}, \ldots, a^{e_{p-1}-e_{j+p-k-1}},a^{-e_{j+p-k}}b^{(k)}, a^{e_1-e_{j+p-k+1}},\ldots ,a^{e_{k-j-1}-e_{p-1}},\\
&\qquad\qquad\qquad\qquad\qquad\qquad\qquad\qquad\quad (b^{(j)})^{-1}a^{e_{k-j}},  a^{e_{k-j+1}-e_1}, \ldots, a^{e_{k-1}-e_{j-1}}\big).
\end{align*}
Then, since $e_{p-(k-j)}=e_{k-j}$,
\[
\psi(x^p)=\big(1, \ldots, 1, (a^{-e_{k-j}}b^{(k)})^p, \ 1, \ldots, 1,\  \big( (a^{e_{k-j}}(b^{(j)})^{-1})^p \big)^{a^{e_{k-j}}}, 1, \ldots, 1  \big).
\]
If $e_{k-j}=0$, then $x$ has order~$p$. Otherwise, analogous to  Lemma~\ref{lem:multi-GGS}, we have that $ (a^{-e_{k-j}}b^{(k)})^p,
(a^{e_{k-j}}(b^{(j)})^{-1})^p \in \St_G(2)\backslash \St_G(3)$, and they are of order~$p$.
It then follows that $x$ is of order~$p^2$ and $x^p \in\St_G(3)\backslash \St_G(4)$.
\end{proof}

\begin{theorem}\label{2-multi-EGS}
Let $G=\langle a,b^{(j_1)},b^{(j_2)},\ldots,b^{(j_n)}\rangle$, for some $2\leq n\leq p$, be a  multi-EGS group defined by a single symmetric vector. 
Then the quotient $G/\st_G(k)$ admits a ramification structure, for all $k\ge 5$ if $G$ is periodic, and for all $k\ge 3$ otherwise.
\end{theorem}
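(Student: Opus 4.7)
The plan parallels Theorem~\ref{multi-GGS main}, using Proposition~\ref{EGS-key} and Lemma~\ref{EGS-single symmetric vector} in place of Proposition~\ref{words}. I will construct two spherical systems of generators $T_1,T_2$ for $G_k$ and verify the disjointness condition~\eqref{intersection1} pairwise, treating the periodic and non-periodic cases separately.

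\emph{Periodic case, $k\ge 4$.} I propose
\begin{align*}
T_1&=\{a^{-1},\ ab^{(j_1)},\ b^{(j_2)},\ldots,b^{(j_n)},\ c_1\},\\
T_2&=\{a(b^{(j_1)})^{2},\ (b^{(j_1)})^{-2},\ (b^{(j_1)})^{-1}b^{(j_2)},\ (b^{(j_2)})^{-1}b^{(j_3)},\ldots,(b^{(j_{n-1})})^{-1}b^{(j_n)},\ c_2\},
\end{align*}
where $c_1,c_2$ are chosen so that each tuple's product is trivial. Both evidently generate $G_k$: the first two entries of $T_2$ recover $a$, after which each $b^{(j_i)}$ is obtained inductively from the successive products $(b^{(j_{i-1})})^{-1}b^{(j_i)}$. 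To check \eqref{intersection1}, I would split into cases on the orders of $x\in T_1$ and $y\in T_2$:
\begin{itemize}
\item If both $x$ and $y$ have order $p$, apply Lemma~\ref{order preserve} using the abelianisation $G/G'\cong C_p^{\,n+1}$.
\item If exactly one of $x,y$ has order $p^2$, the intersection reduces to comparing the unique order-$p$ subgroup inside each cyclic group, so one must show $\langle x\rangle\ne\langle y^p\rangle^g$ (or the symmetric statement). This is obtained by placing the two cyclic subgroups in different stabiliser layers, using Lemma~\ref{lem:multi-GGS} or Lemma~\ref{EGS-single symmetric vector}.
\item If both $x$ and $y$ have order $p^2$, the decisive pairs are $(ab^{(j_1)},a(b^{(j_1)})^{2})$, to which Proposition~\ref{EGS-key} applies with $j=k=j_1$ and $\lambda=1\ne 2=\mu$; and $(ab^{(j_1)},(b^{(j_i)})^{-1}b^{(j_{i+1})})$, where $x^p\in\st_{G_k}(1)\setminus\st_{G_k}(2)$ by Lemma~\ref{lem:multi-GGS} while $y^p\in\st_{G_k}(3)\setminus\st_{G_k}(4)$ by Lemma~\ref{EGS-single symmetric vector}, so $\langle x^p\rangle\ne\langle y^p\rangle^g$.
\end{itemize}
The hypothesis $k\ge 4$ is precisely what Lemma~\ref{EGS-single symmetric vector} requires in the last bullet.

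\emph{Non-periodic case, $k\ge 3$.} Now the defining vector has non-zero sum~$s$, and Proposition~\ref{EGS-key} is no longer available. However, each $(ab^{(j)})^p$ acquires a non-trivial rooted contribution of the form $a^s b^{(j)}$ (up to conjugation) in one of its $\psi$-components, which provides a direct replacement: first-layer $\psi$-comparisons already distinguish $(ab^{(j_1)})^p$ from $(a(b^{(j_1)})^2)^p$, and they also separate $(ab^{(j_1)})^p$ from the $p$-th powers of the products $(b^{(j_i)})^{-1}b^{(j_{i+1})}$ at level two, obviating the need to go down to the fourth layer. I would use tuples essentially identical to those above and rerun the case analysis, replacing each appeal to Lemma~\ref{EGS-single symmetric vector} by a level-two comparison that leverages the non-zero sum $s$.

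\emph{Main obstacle.} The hardest case is the third bullet of the periodic analysis, matching $ab^{(j_1)}$ against $(b^{(j_i)})^{-1}b^{(j_{i+1})}$: both have order $p^2$, so one is forced to locate each $p$-th power precisely in the level filtration. Lemma~\ref{EGS-single symmetric vector} is tailored exactly for this, but the cost is the sharper restriction $k\ge 4$. A secondary subtlety is the careful choice of $T_2$ — replacing $ab^{(j_2)}$ by $a(b^{(j_1)})^{2}$ so that Proposition~\ref{EGS-key} gives $\lambda\ne\mu$ rather than the uninformative conclusion $\lambda=\mu=1$.
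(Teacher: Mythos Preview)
Your plan imports the generating tuples from Theorem~\ref{multi-GGS main} verbatim, but this is where the argument breaks. In a multi-GGS group the directed generators $b_1,\ldots,b_r$ lie along the same path and commute, so the closing element $c_1=b_1\cdots b_r$ has order~$p$ and the abelianisation argument via Lemma~\ref{order preserve} goes through. In the present multi-EGS setting the generators $b^{(j_1)},\ldots,b^{(j_n)}$ lie along \emph{different} paths and do \emph{not} commute; indeed Lemma~\ref{EGS-single symmetric vector} already shows that a product of just two of them has order~$p^2$. You give no analysis of the order of $c_1=(b^{(j_1)}\cdots b^{(j_n)})^{-1}$, no placement of $c_1^{\,p}$ in the level filtration, and no argument separating $c_1$ from the elements of~$T_2$. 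The same problem afflicts $c_2$: computing the product of your $T_2$ gives (up to inverse) $a(b^{(j_1)})^{-1}b^{(j_n)}$, which is not of the shape $a(b^{(j)})^{\lambda}$ required by Proposition~\ref{EGS-key}, so your ``decisive pair'' analysis does not cover $ab^{(j_1)}$ against~$c_2$. Finally, your first bullet relies on $G_k/G_k'\cong C_p^{\,n+1}$, but the paper establishes $\st_G(r+1)\le G'$ only for multi-GGS groups (equation~\eqref{eq:inside G'}); no such statement is available here.

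The paper's proof avoids all of this by choosing the tuples so that every element is either $a^{\pm 1}(b^{(j)})^{\lambda}$, a single $b^{(j)}$, or a product $b^{(j_i)}^{\pm 1}b^{(j_{i+1})}^{\pm 1}$, and so that the product of each tuple is again of one of these forms (for $T_1$ the product is simply~$a^2$). This guarantees that every element has order computed by Lemma~\ref{lem:multi-GGS} or Lemma~\ref{EGS-single symmetric vector}, and every $p^2$-versus-$p^2$ comparison reduces either directly to Proposition~\ref{EGS-key} or, for the $b_i^{-1}b_{i+1}$ versus $b_\ell^{\pm 1}b_{\ell+1}^{\pm 1}$ pairs, to Proposition~\ref{EGS-key} applied to the first-level components of the $p$-th powers. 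No appeal to Lemma~\ref{order preserve} is needed in the periodic case.
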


\begin{proof}
For simplicity, throughout the proof we will write $b_i$ instead of $b^{(j_i)}$  for all $1\leq i \leq n$. Also, without loss of generality, we suppose that $j_{i+1}>j_i$ for  all $1\leq i < n$.
Write $(e_1,\ldots,e_{p-1})$ for the defining vector of~$G$.
We deal separately with two cases where $G$ is periodic and where $G$ is non-periodic.

Let us first assume that $G$ is periodic. 
Then the defining vector~$\mathbf{e}$ is non-constant.
Also note that since $\mathbf{e}$ is symmetric, if $p=3$, then $\mathbf{e}$ is necessarily constant.
Thus, for the periodic case, we have that $p\geq 5$. 

We fix $k\geq 5$.
Then the sets
\[
\{ab_1,\ b_1^{-1}b_2,\ b_2^{-1}b_3,\ \ldots\ ,\ b_{n-1}^{-1}b_n, \ b_n^{-1}a\}
\]
and 
\[
\{b_1,\  b_1^{-1}b_2^{-1},\  b_2b_3,\ b_3^{-1}b_4^{-1},\ \ldots\ ,\ b_{n-2}b_{n-1},\ b_{n-1}^{-1}b_n^{-1},\ b_n^3a\}
\]
are both systems of generators for $G_k$, if $n$ is even.
If $n$ is odd, then instead of the latter set we simply consider the set
\[
\{b_1^{-1}, \ b_1b_2,\ b_2^{-1}b_3^{-1},\ \ldots\ ,\ b_{n-2}b_{n-1},\ b_{n-1}^{-1}b_n^{-1},\ b_n^3a\}.
\] 
Setting
\[
T_1= 
\{ab_1,\ b_1^{-1}b_2,\ b_2^{-1}b_3,\ \ldots\ ,\ b_{n-1}^{-1}b_n,\ b_n^{-1}a,\ a^2\}
\]
and
\[
T_2=
\{b_1,\ b_1^{-1}b_2^{-1},\  b_2b_3,\ b_3^{-1}b_4^{-1},\ \ldots\ ,\ b_{n-2}b_{n-1},\ b_{n-1}^{-1}b_n^{-1},\ b_n^3a, \ b_n^2a\}
\]
if $n$ is even, else
\[
T_2=
\{b_1^{-1}, b_1b_2, b_2^{-1}b_3^{-1},\ \ldots\ , b_{n-2}b_{n-1}, b_{n-1}^{-1}b_n^{-1}, b_n^3a, \ b_n^2a\}
\]
if $n$ is odd, we claim that these sets yield a ramification structure for $G_k$. 
We will only consider the case where $n$ is even. In the case that $n$ is odd, the same arguments apply.

As in the proof of Theorem~\ref{multi-GGS main}, if $x=a^2 \in T_1$ with $y\in T_2$, then obviously condition~\eqref{intersection1} holds.
We now let $x\in T_1\backslash \{a^2\}$ with $y=b_1 \in T_2$. 
Then if $x$ has order~$p^2$, by Lemmas~\ref{lem:multi-GGS} and \ref{EGS-single symmetric vector}, 
we have $x^p \in \St_{G_k}(2)$ while $b_1$ is of order $p$ and $b_1 \notin \St_{G_k}(2)$. Hence we suppose that $x$ has order~$p$, and therefore $x\in\{b_1^{-1}b_2, b_2^{-1}b_3, \ldots , b_{n-1}^{-1}b_n\}$. We know from the proof of Lemma~\ref{EGS-single symmetric vector} that for such an~$x$, there are two components of~$\phi_k(x)$ having non-trivial directed automorphisms, whereas $\phi_k(b_1)$ has only one such component.
Thus, the claim follows in these cases.

Next, we let  $x\in \{ab_1, b_n^{-1}a\}\subseteq T_1$. 
If $y\in \{ b_n^3a, b_n^2a\}\subseteq T_2$, then we apply Proposition~\ref{EGS-key}.
On the other hand, if $y\in  \{b_1^{-1}b_2^{-1},\  b_2b_3, \ \ldots\ ,\ b_{n-1}^{-1}b_n^{-1}\}$, we know that either $y^p\in \St_{G_k}(3)\backslash \St_{G_k}(4)$ by  Lemma~\ref{EGS-single symmetric vector}, or $y$ has order~$p$ and $y\in \St_{G_k}(1)\backslash \St_{G_k}(2)$.
However $x^p \in \St_{G_k}(2)\backslash \St_{G_k}(3)$ by Lemma~\ref{lem:multi-GGS}. 
Hence, we are done in these cases. 
The same argument applies if $x\in \{b_1^{-1}b_2,\ b_2^{-1}b_3, \ \ldots\ ,\ b_{n-1}^{-1}b_n\} \subseteq T_1$ with $y\in \{ b_n^3a, b_n^2a\}\subseteq T_2$.

Thus, it remains to check the cases $x\in \{b_1^{-1}b_2,\  b_2^{-1}b_3, \ \ldots\ ,\ b_{n-1}^{-1}b_n\} \subseteq T_1$ and  $y\in  \{b_1^{-1}b_2^{-1},\  b_2b_3,  \ \ldots\ ,\ b_{n-1}^{-1}b_n^{-1}\} \subseteq T_2$.
If exactly one of $x$ and $y$ has order~$p$, then clearly  \eqref{intersection1} holds from what we have seen above. Suppose next  that both $x$ and $y$ have order~$p$. From the proof of Lemma~\ref{EGS-single symmetric vector}, we have
\[
\phi_k(x)
=\big( a^*, \ldots, a^*,  b_{i+1},  a^*, \ldots, a^*,  b_i^{-1},  a^*, \ldots, a^*\big)
\]
and
\[
\phi_k(y)
=\big( a^*, \ldots, a^*,  b_{\ell+1},  a^*, \ldots, a^*,  b_\ell,  a^*, \ldots, a^*\big).
\]
If there was $g\in G_k$ such that $x^{\delta}=(y^\epsilon)^g$ for some $\epsilon,\delta\in\mathbb{F}_p^*$, 
first of all we must have a compatible spacing of the directed automorphisms in $\phi_k(x)$ as compared to $\phi_k(y)$; that is,
$$
\{j_{i+1}-j_i, p-j_{i+1}+j_i-2 \}=\{j_{\ell+1}-j_\ell, p-j_{\ell+1}+j_\ell-2\}.
$$
Without loss of generality, suppose that $j_{i+1}-j_i=j_{\ell+1}-j_\ell$; the other case is similar. It follows that $g=a^{j_{\ell+1}-j_{i+1}}h$, for some $h\in\text{St}_{G_k}(1)$. Writing $\phi_k(h)=(h_1,\ldots,h_p)$, from the fact that 
$$
\phi_{k-1}\big((b_{\ell+1}^\epsilon)^{h_{p-j_{i+1}+1}}\big)=\phi_{k-1}(b_{i+1}^\delta),
$$
it similarly follows that $h_{p-j_{i+1}+1}=a^{j_{\ell+1}-j_{i+1}}\widetilde{h}$ for some $\widetilde{h}\in\text{St}_{G_{k-1}}(1)$;
recall also that $k\ge 5$. Then, since there is only one defining vector, we deduce that if $\langle x\rangle \cap \langle y\rangle^g\ne \{1\}$, we must have  $\epsilon=\delta$, equivalently $y^g=x$. However, now considering the fact that 
$$
\phi_{k-1}\big(b_{\ell}^{h_{p-j_i+1}}\big)=\phi_{k-1}(b_i^{-1})
$$
yields $h_{p-j_i+1}=a^{j_{\ell}-j_{i}}\widehat{h}$ for some $\widehat{h}\in\text{St}_{G_{k-1}}(1)$. Then  
as
\[
\phi_{k-1}(b_i^{-1})=(a^{-e_i},\ldots, a^{-e_{p-1}},b_i^{-1}, a^{-e_1},\ldots, a^{-e_{i-1}})
\]
and
\[
\phi_{k-1}(b_{\ell}^{a^{j_{\ell}-j_{i}}\widehat{h}})\equiv(a^{e_i},\ldots, a^{e_{p-1}},b_{\ell}, a^{e_1},\ldots, a^{e_{i-1}})\pmod {G_{k-2}'\times\cdots\times G_{k-2}'},
\]
it follows that
$\mathbf{e}=-\mathbf{e}$, a contradiction.

Lastly, suppose that both $x$ and $y$ have order~$p^2$. By the proof of Lemma~\ref{EGS-single symmetric vector},
\[
\phi_k(x^p)=
\big(1, \ldots, 1, (a^{-e_{j_{i+1}-j_i}}b_{i+1})^p, \ 1,  \ldots, 1, \ \big( (a^{e_{j_{i+1}-j_i}}b_i^{-1})^p \big)^a, 1,  \ldots, 1  \big).
\]
Now, since $b_{\ell}^{-1}b_{\ell+1}^{-1}=\big( (b_{\ell}b_{\ell+1})^{b_{\ell}}\big)^{-1}$, it is enough to consider only $y=b_{\ell}b_{\ell+1}$.
Then 
\[
\phi_k(y^p)=
\big(1, \ldots, 1, (a^{e_{j_{\ell+1}-j_\ell}}b_{\ell+1})^p, \ 1,  \ldots, 1, \ \big( (a^{e_{j_{\ell+1}-j_\ell}}b_{\ell})^p \big)^a, 1 \ldots, 1  \big).
\]
We now compare the third-level decomposition of $x^p$ versus $y^p$. As seen in the above case, after aligning all the directed generators at the third-level, we see that in the third-level decomposition for~$x^p$, associated to $p$ of the directed generators (those descending from $(a^{-e_{j_{i+1}-j_i}}b_{i+1})^p$), we have $\mathbf{e}$ as the defining vector, and we have $-\mathbf{e}$ associated to the remaining $p$ directed generators (those descending from $(a^{e_{j_{i+1}-j_i}}b_i^{-1})^p$). However, in the third-level decomposition for~$y^p$, all associated defining vectors are~$\mathbf{e}$. Hence we
conclude that 
$\langle x^p\rangle \cap \langle y^p\rangle^g=\{1\}$ for any $g\in G_k$.

 This completes the periodic case.

\vspace{10pt}
We now suppose that $G$ is not periodic.
We fix $k\ge 3$.
If $n$ is even, the sets
\[
\{ab_1b_2^{-1},\ b_2,\ b_1^{-1},\ b_2b_3^{-1}a,\ a^{-1} b_3b_4^{-1},\ \ldots\ ,\ a^{-1}b_{n-1}b_n^{-1}\}
\]
and 
\[
\{ab_1,b_2a^{-1},\ ab_3,b_4a^{-1},\ \ldots\ ,\ ab_{n-1},\ b_n^\lambda a^{-1},\ a\},
\]
where $\lambda\in\mathbb{F}_p^*$ is such that $n-1+\lambda\not\equiv 0 \pmod p$,
are both systems of generators for~$G_k$, where if $n=2$, instead of the former set we  consider $\{ab_1b_2^{-1},\ b_1,\ b_2^{-1}\}$.
If $n$ is odd, we correspondingly have
\[
\{ab_1b_2^{-1},\ b_2,\ b_1^{-1},\ b_2b_3^{-1}a,\ a^{-1} b_3b_4^{-1},\ \ldots\ ,\ b_{n-1}b_n^{-1}a\} 
\]
and
\[
\{ab_1,\ b_2a^{-1},\ ab_3,\ b_4a^{-1},\ \ldots\ ,\ b_{n-1}a^{-1},\ ab_n^\lambda ,\ a\}.
\]

We claim that these sets yield a ramification structure for~$G_k$.  For convenience, we suppose that $n$ is even; the odd case follows analogously.

Set 
\begin{align*}
T_1&=\{ab_1b_2^{-1},\ b_2,\ b_1^{-1},\ b_2b_3^{-1}a,\ a^{-1} b_3b_4^{-1},\ \ldots\ ,\ b_{n-2}b_{n-1}^{-1}a,\ a^{-1}b_{n-1}b_n^{-1},\ ab_2b_n^{-1} \}
\end{align*}
 if $n>2$, else
 \[
 T_1=\{ab_1b_2^{-1},\ b_1,\ b_2^{-1},\   ab_1b_2^{-1}b_1b_2^{-1} \},
 \]
 together with
\[
T_2=\{ab_1,\ b_2a^{-1},\ \ldots\ ,\ ab_{n-1},\ b_n^\lambda a^{-1} ,\ a,\ ab_1b_2\cdots b_{n-1}b_n^{\lambda}\}.
\]

Clearly for all $x\in T_1$ with $y=a$ the equation \eqref{intersection1} holds. Similarly for $x\in \{b_1^{\pm 1}, b_2^{\pm 1}\}\cap T_1$ with $y\in T_2$.

For the remaining cases, we first  consider the case $x=ab_1b_2^{-1}$ with $y=ab_1$.
Assume that the symmetric defining vector $\mathbf{e}$ has exponent sum $s \not\equiv 0 \pmod p$.

Recall that
\[
\phi_k\Big(\big((ab_1b_2^{-1})^p\big)^g\Big)
=\Big((b_2^{-1}b_1^{a^*})^{g_1},\ldots,(b_2^{-1}b_1^{a^*}\big)^{g_{p}}\Big)
\]
for  some $g_1,\ldots,g_p\in G_{k-1}$ and unspecified exponents~$*$, and note that $\phi_{k-1}(b_2^{-1}b_1^{a^*})$ has at most two components with non-rooted automorphisms.
However
\[
\phi_k((ab_1)^{p^2})=\Big(\big((a^sb_1)^{a^*}\big)^p,\ldots,\big((a^sb_1)^{a^*}\big)^p\Big),
\]
and
\[
\phi_{k-1} \big((a^sb_1)^p\big) = \big((a^sb_1)^{a^*},\ldots,(a^sb_1)^{a^*}\big).
\]
From comparing the second-level decomposition of $(ab_1)^{p^2}$ with the second-level decomposition of $(ab_1b_2^{-1})^p$, we see that $\langle (ab_1b_2^{-1})^p\rangle^g$, for any $g\in G_k$, has trivial intersection with $\langle (ab_1)^{p^{k-1}}\rangle$.

For the other cases, a similar argument follows. Indeed, consider in particular the last element in~$T_2$. We observe that modulo $G_{k-1}'\times \cdots\times G_{k-1}'$,
\begin{align*}
&\phi_k\big((ab_1b_2\cdots b_{n-1}b_n^{\lambda})^p\big)\equiv
\big(\, a^{s(n-1+\lambda)} b_1b_2\cdots b_{n-1}b_n^\lambda\,,\,\ldots\,,\,   a^{s(n-1+\lambda)} b_1b_2\cdots b_{n-1}b_n^\lambda\,\big).
\end{align*}
From the choice of~$\lambda$, we have that $s(n-1+\lambda)\not\equiv 0\pmod p$, and the above argument applies.

The proof is now complete.
\end{proof}

\subsection*{Acknowledgements}
We thank G.\,A. Fern\'{a}ndez-Alcober for his useful comments.

\end{document}